\newcommand{\indikator}{\mathbb{I}}
\def \zeros{\mathbb{O}}
\def \d{\, \textup{d}}
\def \T{\textup{T}}
\def \GK{\Pi_\mathbb{K}}
\def \diag{\textup{diag}}
\def \C{\mathbb{C}}
\def \NKL{M}
\def \R{\mathcal{R}}
\def \yI{v}
\def \yII{\sigma}
\def \Uniform{\mathcal{U}}
\def \Normal{\mathcal{N}}
\def \Dhat{\boldsymbol{\widehat{\mathcal{D}}}}
\def \DhatPlus{\boldsymbol{\widehat{\mathcal{D}}^+}}
\def \DhatMinus{\boldsymbol{\widehat{\mathcal{D}}^-}}
\def \Lambdasteady{\boldsymbol{\widehat{\mathcal{D}}}}
\def \DerivStress{\bar{\epsilon}^p_\sigma(\sigma^*)}
\DeclareMathOperator*{\argmin}{argmin}
\def \Ahat{\boldsymbol{\widehat{A}}}
\def \SRhat{\boldsymbol{\widehat{C}}}
\def \Shat{\boldsymbol{\widehat{S}}}
\def \Bhat{\boldsymbol{\widehat{\mathcal{Q}}}}
\def \Ghat{\boldsymbol{\widehat{\mathcal{B}}}}
\def \Tsteady{T}
\def \Lambdasteady{\Lambda}
\def \lambdasteady{\lambda}
\def \Riemann{\mathcal{R}}
\def \SRiemann{C}
\def \GRiemann{B}
\def \Gphysicslinear{B_y}
\def \Gphysicsnonlinear{\mathcal{B}_{\widehat{y}}}
\def \langleP{\langle} 
\def \lVertP{\lVert} 
\def \K{\mathbb{K}}
\def \Rhat{\boldsymbol{\widehat{\mathcal{R}}}}
\def \Rd{\boldsymbol{\widehat{\zeta}}}
\def \ZetaPlus{\boldsymbol{\widehat{\zeta}^+}}
\def \ZetaMinus{\boldsymbol{\widehat{\zeta}^-}}
\def \mub{\hat{\mu}}
\def \smallestEV{\sigma_{\min}}
\def \largestEV{\sigma_{\max}}
\def \S{\mathcal{S}}
\definecolor{gruen}{rgb}{0,0.5,0}
\definecolor{braun}{rgb}{0.5,0.25,0.25}
\def \process{\textit{Pr}}
\def \M{\mathcal{M}}
\def \H{\mathcal{H}}
\def \L{\mathcal{L}}
\def \D{\textup{D}}
\def \lambdaMatern{\lambda_s}
\def \lambdaKL{d}
\definecolor{lila}{rgb}{0.9,0,1}
\definecolor{gruen}{rgb}{0,0.5,0}
\definecolor{braun}{rgb}{0.5,0.25,0.25}
\definecolor{farbe1}{rgb}{0.75,0.34,0}
\definecolor{myblue}{rgb}{0.196,0.196,0.694}
\definecolor{myred}{rgb}{1,0.3,0.1}
\begin{document}

\volume{}
\title{Feedback control for random, linear hyperbolic balance laws}
\titlehead{Feedback control for random, linear hyperbolic balance laws}
\authorhead{Markus Bambach, Stephan Gerster, Michael Herty, Muhammad Imran}

\author[1]{Markus Bambach}
\corrauthor[2]{Stephan Gerster}
\author[2]{Michael Herty}
\author[3]{Muhammad Imran}
\corremail{gerster@igpm.rwth-aachen.de}

\corraddress{Institut f\"{u}r Geometrie und Praktische Mathematik,  
	RWTH Aachen University,  
	Templergraben~55, 52062~Aachen, Germany}

\address[1]{Institut f\"ur virtuelle Produktion,   TH Z\"urich,  
	Technoparkstra{\ss}e~1, 8005 Z\"urich, Switzerland}

\address[2]{Institut f\"{u}r Geometrie und Praktische Mathematik,  
RWTH Aachen University,  
Templergraben~55, 52062~Aachen, Germany}

\address[3]{Lehrstuhl Konstruktion und Fertigung,  
	BTU Cottbus-Senftenberg,  
	Konrad-Wachsmann-Allee~17,  03046~Cottbus, Germany}
	

\dataO{mm/dd/yyyy}
\dataF{mm/dd/yyyy}

\abstract{
We design boundary controls of physical systems that are faced by uncertainties. The system dynamics are 
described by random hyperbolic balance laws.  
The control aims to steer the system to a desired state under  uncertainties. 
We propose a control based on Lyapunov stability analysis of a suitable series expansion of the random dynamics. 
The control damps the impact 
of uncertainties exponentially fast in time. 
The presented approach can be applied to a large class of physical systems and random perturbations, as~e.g.~Gaussian processes.  
We illustrate the boundary control effect on a stochastic viscoplastic material model. 	
}

\keywords{Systems of hyperbolic balance laws, feedback stabilization, Lyapunov function, stochastic Galerkin, viscoplastic deformations}

\maketitle


\section{Introduction}


Boundary stabilization  has been  studied intensively in the past years~\cite{O1}. 
 A well-known approach to prove  exponential stability of a desired state is the construction of suitable Lyapunov functionals and the analysis of so-called \emph{dissipative boundary conditions}.  Exponential decay of a continuous Lyapunov function under \emph{dissipative} boundary conditions has been proven in~\cite{L7,L1,L4,G4,O1}.
Also  explicit decay rates for numerical schemes have
been established~\cite{D1,D2,Gerster2019,Knapp2019,Weldegiyorgis2020}.

Most results are based on the assumption that model parameters and desired states are known exactly. 
However, often there is need to take uncertainties into account. 
For instance, model parameters are uncertain due to noisy  measurements and due to variations in the behaviour of materials. 
Moreover, epistemic uncertainties arise,  when the considered mathematical models do not exactly describe the true physics as e.g.~in constitutive equations for material models.

When the underlying model is not known exactly, but is given by a probability law or by statistical  moments, the deterministic  stabilization concept should be extended to this stochastic case. 
Monte-Carlo methods may be used to apply deterministic stabilization concepts to each realization. For instance, sampling-based methods are used in~\cite{Gotzes2016,Schuster2019} to analyze the existence of optimal solutions for some optimization problems with probabilistic constraints.

In contrast, the underlying tool used in this paper is the representation of stochastic perturbations by a series of orthogonal functions,  known as generalized polynomial chaos~(gPC) expansions~\cite{S1,S2,S3}. 
Expansions of the stochastic input are substituted into the governing equations and they are projected by a Galerkin method to obtain deterministic evolution equations for the coefficients of this expansion. 
This intrusive approach is often applied in uncertainty quantification.  In this direction many  results for kinetic equations are available~\cite{K1,K2,Zanella2019,Yuhua2017,Yuhua2018,Carrillo2019,Zanella2020}. Recently, also  results for  hyperbolic equations have been established~\cite{H0,H2,H3,H4,S5,GersterJCP2019,GersterHertyCicip2020,KuschMaxPrin2017}.

The problem if the random solutions to kinetic equations converge to the deterministic kinetic equilibrium exponentially fast has been analyzed~\cite{Li2016,Yuhua2018,Liu2018,Hui2020} by using hypocoercivity properties~\cite{Herau2006, Herau2017,Villani2009,Dolbeault_etal2009,Dolbeault_etal2015}.
However, results for stabilizing general hyperbolic systems are only partial. Already in the deterministic case solutions to  hyperbolic conservation laws exist in the classical sense only in finite time due to the occurence of shocks~\cite{Dafermos} and 
Lyapunov's indirect method~\cite{Khalil} does not necessarily hold. 
Furthermore, the results are so far restricted  in the sense that the destabilizing effect of the source term  is sufficiently  small~\cite{Bastin2011,O1,Gugat2019}.
Explicit decay rates for the $L^2$-stabilization of steady states have been established in the simplified case of \emph{linear, deterministic} systems~\cite[Prop.~5.2]{O1}, which include Maxwell's,  elasticity and linearized Euler equations. 

This paper extends these results to the stochastic case.  
The stochastic system is reformulated as a sequence of deterministic problems. A weighted $L^2$-norm of this series is used as Lyapunov function. 
We will show that a modification of the dissipativity condition~\cite[Th.~2.3]{L7} yields an exponentially fast decaying Lyapunov function, which in turn makes the mean and variance of stochastic deviations diminish exponentially fast over time.

The presented approach is related to the 
variance reduction by robust design of boundary conditions~\cite{Wahlstein2015}. There, energy estimates are derived to obtain well-posed boundary value problems and boundary conditions that reduce the variance in the system.  
In extension, the Lyapunov stability analysis~\cite{L7,O1} damps deviations~\emph{exponentially fast} over time as long as stronger assumptions on the boundary conditions and the source term are satisfied. 


This paper is structured as follows. 
Section~\ref{Sec1} is devoted to the representation of stochastic processes with orthogonal functions. 	
We derive a deterministic formulation of the underlying stochastic boundary value problem. 
Section~\ref{Sec4} considers a stability analysis for general linear hyperbolic balance laws. A Lyapunov function that gives an upper bound on deviations from desired states is presented. Furthermore, conditions on the boundary control are stated that make the Lyapunov functions and hence the deviations decay exponentially fast over time. 
Possible extensions to nonlinear systems are discussed in Section~\ref{SectionNonlinear}.  
Finally, Section~\ref{SectionFeedback} illustrates the presented approach by steering a viscoplastic material to a desired state under uncertainties. 


\section{Random hyperbolic boundary value problems}\label{Sec1}

Uncertainties are included by a possibly multidimensional random variable~$\xi:\Omega\rightarrow \mathbb{R}^{\NKL}$ that is defined on a probability space~$\big(\Omega,\mathcal{F}(\Omega),\mathbb{P}\big)$. 
The space- and time-depending  dynamics~${y\big(t,x;\xi(\omega)\big) \in\mathbb{R}^2}$ are for each fixed realization~$\omega\in\Omega$ described by ${2\times2}$ strictly hyperbolic balance laws of the form
\begin{equation}\label{LyapunovDev1}
\partial_t y\big(t,x;\xi(\omega)\big)
+
A\big(x;\xi(\omega)\big)
\partial_x y\big(t,x;\xi(\omega)\big)
=
-S\big(x;\xi(\omega)\big)
y\big(t,x;\xi(\omega)\big) \\
\quad
\text{for}
\quad
y =
(
v, \sigma
)^\T. 
\end{equation}
The matrix~${A\big(x;\xi(\omega)\big)\in\mathbb{R}^{2\times 2}}$, which describes advection, is assumed  diagonalizable, Lipschitz-continuous and the source term~${S\big(x;\xi(\omega)\big)\in\mathbb{R}^{2\times 2}}$ is assumed continuous in~${x\in[0,L]}$.
A typical model, which we consider in Section~\ref{SectionFeedback},  is a viscoplastic deformation described by a displacement velocity or acceleration~$v$ and stress~$\sigma$. 
If a linear control~$\Gphysicslinear\in\mathbb{R}^{2\times 2}$ is applied at the boundaries of the spatial  domain~$[0,L]$,  the desired control is of the form 
\begin{equation}\label{BC}
\begin{pmatrix}
{\yI}(t,0;\xi) \\  {\yI}(t,L;\xi)
\end{pmatrix}
=
\Gphysicslinear
\begin{pmatrix}
{\yII}(t,0;\xi) \\  {\yII}(t,L;\xi)
\end{pmatrix}.
\end{equation}


\noindent
Various choices of the control~$\Gphysicslinear$ are possible and a suitable choice will be presented in Section~\ref{SubSecControlRule}. 
In particular, we are interested in the control of deviations from a desired state. We introduce the notation 
$$
\Delta y(t,x;\xi) 
\coloneqq
y(t,x;\xi)-y^*(x;\xi),
$$
 where 
$y^*(x;\xi)$ denotes the desired state, which may include also uncertainties.  
The analysis is described for dynamics on a one-dimensional physFical domain~$[0,L]\subset\mathbb{R}$, i.e.~one arc. Dynamics~$y^{(j)} = (v^{(j)},\sigma^{(j)})^\T$ on multiple arcs $j=1,\ldots,n$ can be coupled by specifying appropriate node  conditions, as illustrated in Figure~\ref{FigNetwork0}. 

\vspace{6mm}
\begin{figure}[h]
	\tikzstyle{block} = [thick,draw, fill=blue!20, rectangle, 
minimum height=3em, minimum width=3em]
\tikzstyle{Kreis} = [thick,draw, fill=blue!20, circle, 
minimum height=3em, minimum width=3em]
\tikzstyle{sum} = [draw, fill=darkgreen!20, rectangle, 
minimum height=2em, minimum width=3em]
\tikzstyle{input} = [coordinate]
\tikzstyle{output} = [coordinate]
\tikzstyle{pinstyle} = [ pin edge={to-,thick,black}]
\tikzstyle{vecArrow} = [thick,double distance=25pt]

\vspace{-1cm}\hspace{-5mm}
\begin{tikzpicture}[scale=0.1]

\draw [vecArrow] (0,0) -- (50,0);
\draw [vecArrow] (155,10) -- (80,2.3);
\draw [vecArrow] (155,-10) -- (80,-2.3);

\node [block,align=center] at (75,0) {\textbf{node conditions:}\\
\	$
\begin{pmatrix} \boldsymbol{v} (t,0;\xi) \\ \boldsymbol{v}(t,L;\xi) \end{pmatrix} 
=
\Gphysicslinear
\begin{pmatrix}  \boldsymbol{\sigma} (t,0;\xi) \\ \boldsymbol{\sigma} (t,L;\xi) \end{pmatrix} 
$ \
  };

\node at (30,0) {$y^{(1)}(x;\xi) + \Delta y^{(1)}(t,x;\xi) $};
\node at (121,6.5) {$y^{(2)}(x;\xi) + \Delta y^{(2)}(t,x;\xi) $};
\node at (121,-6.8) {$y^{(n)}(x;\xi) + \Delta y^{(n)}(t,x;\xi) $};

\node at (4.5,0) {arc 1};
\node at (148,10) {arc 2};
\node at (148,-10) {arc $n$};

\node at (60,12) {
$ \boxed{\boldsymbol{v}\coloneqq \big( v^{(1)},\ldots,v^{(n)} \big)^\T}$
};

\node at (60,-12) {
	$\boxed{ \boldsymbol{\sigma}\coloneqq \big( \sigma^{(1)},\ldots,\sigma^{(n)} \big)^\T}$
};







\end{tikzpicture}
	\caption{Network with $n$ arcs in a perturbed steady state with linear boundary conditions~$\Gphysicslinear\in\mathbb{R}^{2n\times 2n}$.}
	\label{FigNetwork0}
\end{figure}
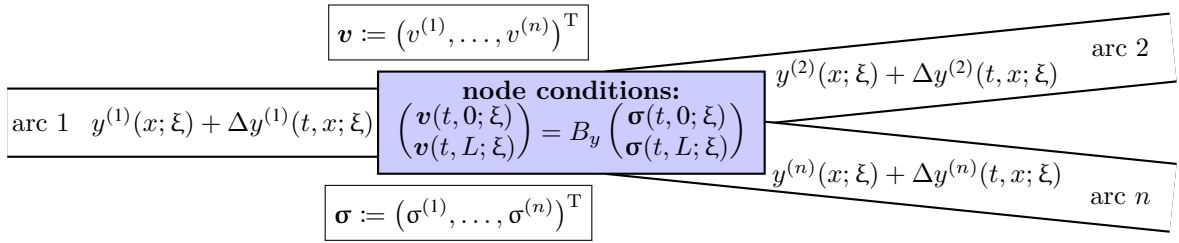

\noindent
A stabilization strategy could be to apply deterministic concepts to each possible realization.  This ``worst-case'' ansatz, however, depends sensitively on possibly large deviation in the realizations. 
A more efficient stabilization concept is a  mean squared error~${
	\mathbb{E} \big[
	\lVert \Delta y(t,x;\xi) 
	\rVert^2 \big]
}$ 
that should decay over time as fast as possible.
Although negligibly unlikely events are permitted,  the stabilization concept is still strong. 
It makes both the mean~$\mathbb{E}$ and the variance~$\mathbb{V}$ of deviations decay, since we have 
\begin{equation}\label{MSE}
\mathbb{E} \bigg[
\big\lVert \Delta y(t,\cdot;\xi) 
\big\rVert^2 \bigg]
=
\mathbb{E} \Big[
\big\lVert \Delta y(t,\cdot;\xi) 
\big\rVert\Big]^2
+
\mathbb{V} \Big[
\big\lVert \Delta y(t,\cdot;\xi) 
\big\rVert\Big]
\end{equation}
for any suitable norm~$\lVert \cdot \rVert$. 
In the following we will consider, unless otherwise stated, the~$L^2$-norm~$\lVert\cdot\rVert_{L_2}$ with respect to space~$x\in[0,L]$, i.e.
$${
\mathbb{E} \bigg[
\big\lVert \Delta y(t,\cdot;\xi) 
\big\rVert^2_{L^2} \bigg]
=
\mathbb{E} \bigg[
\int
\Delta v(t,x;\xi)^2 
+
\Delta \sigma(t,x;\xi)^2 
\d x
\bigg]
\quad\text{with}\quad
\Delta y =
\begin{pmatrix}
\Delta v \\ \Delta \sigma
\end{pmatrix}.
}$$
Furthermore, this stabilization concept allows for an equivalent  representation of the mean squared error~\eqref{MSE} by a series of deterministic equations, which will be introduced in this section.

\subsection{Representation of stochastic processes}\label{Sec2}

We consider a stochastic process~$\process \in L^2\big( (0,L);\mathbb{R} \big) \bigotimes \mathbb{L}^2(\Omega,\mathbb{P}) $ that admits a tensor product structure such that  for each fixed point in space ${x\in(0,L)}$ the process belongs to the $\mathbb{L}^2$-space
\begin{alignat*}{8}
&\mathbb{L}^2(\Omega,\mathbb{P}) &&\coloneqq \Big\{ \process  \, : \, \Omega \rightarrow L^2\big( (0,L);\mathbb{R} \big), \ \omega \mapsto \process(\cdot;\omega)\ \Big| \ \big\lVertP \process(x;\cdot)  \big\rVert_{\mathbb{P}} < \infty \Big\} \\
& \text{for}  
&&
\big\langleP \process(x_1;\cdot),\process(x_2;\cdot) \big\rangle_{\mathbb{P}} \coloneqq  \int\limits \process(x_1;\omega) \process(x_2;\omega) \d \mathbb{P}(\omega) \\
&\text{and}
&&\big\rVert \process(x;\cdot) \big\rVert_{\mathbb{P}} \coloneqq
\sqrt{
	\big\langle \process(x;\cdot),\process(x;\cdot) \big\rangle_{\mathbb{P}}	
}.
\end{alignat*}
This tensor product structure allows to express the \textbf{expected value} and the \textbf{covariance kernel} by
\begin{align}
\mathbb{E}\big[ \process(x_1;\omega) \process(x_2;\omega) \big] 
&\coloneqq 
\big\langleP \process(x_1;\cdot), \process(x_2;\cdot) \big\rangle_{\mathbb{P}}, \tag{$\mathbb{E}$} \\
\C(x_1,x_2)\coloneqq
\textup{Cov}\big[\process(x_1;\omega),\process(x_2;\omega)\big]  
&\coloneqq \mathbb{E}\big[ \process(x_1;\omega) \process(x_2;\omega) \big]-\mathbb{E}\big[\process(x_1;\omega)\big] \mathbb{E}\big[\process(x_2;\omega)\big].
\tag{$\C$}
\label{CovarianceKernel}
\end{align}

\noindent
A centered stochastic process
that is mean square continuous, i.e.~${
\big\lVert \process(x;\cdot)-\process(x^*;\cdot) \big\rVert_{\mathbb{P}}
\rightarrow 0
}$ for 
${x \rightarrow x^*}$, 
admits the  \textbf{Karhunen-Lo\`{e}ve expansion}
\begin{equation}\label{KL}\tag{KL}
\mathcal{K}[\process](x;\omega)
\coloneqq
\sum_{k=1}^{\NKL}
\sqrt{\lambdaKL_k}
\psi_k(x)
\xi_k(\omega) 
\quad \text{with} \quad
\xi_k(\omega) 
\coloneqq
\frac{1}{\sqrt{\lambdaKL_k}}
\int_0^L
\process(x;\omega) \psi_k(x) 
\d x,
\end{equation}
where~$\psi_k$ and~$\lambdaKL_k>0$ are eigenfunctions and eigenvalues of the covariance kernel~\cite{Rasmussen}.
Although the random variables~$\xi_k$ are formally defined,  there is in general no practical expression. 
For Gaussian processes, however, they are given by independent  normally distributed random variables~\cite{Rasmussen,S4}.
Therefore, Gaussian processes are often used, although it might be  problematic, when representing bounded physical processes. 

\emph{Time-varying} 
 stochastic dynamics  
\emph{cannot} be expressed directly in terms of  Karhunen-Lo\`{e}ve expansions, which require that the covariance structure is known a priori. For our random dynamics, however, this structure is given only implicitly by a hyperbolic boundary value problem. 
To generalize the Karhunen-Lo\`{e}ve expansion,  
we introduce a \textbf{generalized polynomial chaos} (gPC)  as a set of orthogonal subspaces
$$
\widehat{\S}_k \subseteq \mathbb{L}^2(\Omega,\mathbb{P})
\quad \text{with} \quad
\S_K \coloneqq \bigoplus\limits_{k=0}^K \widehat{\S}_k
\ \rightarrow \
\mathbb{L}^2(\Omega,\mathbb{P})
\quad \text{for} \quad
K \rightarrow \infty.
$$
These subspaces are spanned by orthogonal functions. 
Common choices for one-dimensional random variables~${\xi(\omega)\in\mathbb{R}}$ are the following polynomials~\cite{S4,S3}:

\vspace{4mm}
\noindent
\textbf{Legendre polynomials}
\tabto{5.5cm}
with uniform distribution 
$
\xi \sim \Uniform(-1,1)
$

\noindent
$\displaystyle
\phi_0(\xi) = 1,\quad
\phi_1(\xi) = \xi,\quad
\phi_{k+1}(\xi) = \frac{2k+1}{k+1} \xi \phi_{k}(\xi) - \frac{k}{k+1} \phi_{k-1}(\xi) \quad
$

\vspace{5mm}
\noindent
\textbf{Hermite polynomials}
\tabto{5.5cm}
with Gaussian distribution 
$\xi \sim \Normal(0,1)$
\vspace{1mm}

\noindent
$\displaystyle
\phi_0(\xi) = 1,\quad
\phi_1(\xi) = \xi, \quad
\phi_{k+1}(\xi) = \xi \phi_k(\xi) - k \phi_{k-1}(\xi) \quad
$

\vspace{5mm}
\noindent
Orthogonal functions with respect to multi-dimensional random parameters~${\xi(\omega)\in\mathbb{R}^{\NKL}}$ can be constructed by introducing a multi-index  ${\boldsymbol{k}\coloneqq(k_1,\ldots,k_M)^\T \in \K  }$ associated to an index set~${\K \subseteq\mathbb{N}_0^{\NKL}}$. 
Common choices are the index sets
\begin{alignat}{8}
& \K
&&= 
\big\{ 
\boldsymbol{k} \in \mathbb{N}_0^M \ \big| \
\lVert \boldsymbol{k} \rVert_0 \leq K
\big\}
&& \quad \text{for} \quad  {|\K | = (K+1)^{\NKL}}, \label{BasisFull}\tag{$\K_{\T}$} \\
& \K
&& =
\big\{ 
\boldsymbol{k} \in \mathbb{N}_0^M \ \big| \
\lVert \boldsymbol{k} \rVert_1 \leq K
\big\}
&& \quad \text{for} \quad 
|\K|=\frac{(\NKL+K )!} {\NKL! K !}. \label{BasisSparse}\tag{$\K_{\textup{S}}$}
\end{alignat}

\noindent
Since the random variables~${\xi_1,\ldots,\xi_M}$ are independent, the resulting polynomials
$$
\phi_{\boldsymbol{k}}(\xi)
\coloneqq
\phi_{k_1}(\xi_1) \cdot \ldots \cdot \phi_{k_M}(\xi_M)
\quad\text{satisfy}\quad
\langle \phi_{\boldsymbol{i}}, \phi_{\boldsymbol{j}}  \rangle_{\mathbb{P}}
=
\lVert \phi_{\boldsymbol{i}} \rVert^2_{\mathbb{P}}\,
\delta_{\boldsymbol{i},\boldsymbol{j}},
$$
where we write with abuse of notation~$
\big\langle \phi_{\boldsymbol{i}}, \phi_{\boldsymbol{j}}  \big\rangle_{\mathbb{P}}
=
\big\langle \phi_{\boldsymbol{i}} (\xi), \phi_{\boldsymbol{j}} (\xi) \big\rangle_{\mathbb{P}}
$.   
Then, a dynamic stochastic process~$\process(t,x;\xi)$ is approximated  by an orthogonal projection
\begin{equation}
\label{NgPC} \tag{$\mathbb{K}$\textup{gPC}}
\GK[\process](t,x;\xi) \coloneqq
\sum\limits_{\boldsymbol{k} \in \K }
\widehat{\process}_{\boldsymbol{k}}(t,x) 
\phi_{\boldsymbol{k}}(\xi)
\quad\text{with gPC modes} \quad
\widehat{\process}_{\boldsymbol{k}}(t,x) \coloneqq \frac{\big\langle \process(t,x;\cdot), \phi_{\boldsymbol{k}}(\cdot) \big\rangle_{\mathbb{P}}}{\lVert \phi_{\boldsymbol{k}} \rVert^2_{\mathbb{P}}}.
\end{equation}


\noindent
The truncated expansion~\eqref{NgPC} 
converges in the sense 
${
	\big\lVert \GK[\process](t,x;\cdot) - \process(t,x;\cdot) \big\rVert_{\mathbb{P}} \rightarrow 0
}$ 
for $K\rightarrow\infty$ 
provided that the probability measure satisfies  mild conditions, which hold for Legendre and Hermite polynomials~\cite{S2,funaro2008,courant89,Ullmann2012}.


\subsection{Stochastic Galerkin formulation}

The underlying system~\eqref{LyapunovDev1} is linear and hence the random fluctuations also satisfy the differential equation
\begin{equation}\label{EqDeviations}
\partial_t \Delta y(t,x;\xi) 
+
A(x;\xi)
\partial_x \Delta y(t,x;\xi) 
=
-S(x;\xi)
\Delta y(t,x;\xi). 
\end{equation}
The eigenvalue decomposition 
\begin{equation}\label{RandomEW}
A(x;\xi) = \Tsteady(x;\xi) \Lambda(x;\xi) \Tsteady^{-1}(x;\xi) 
\quad\text{with}\quad
\Lambda(x;\xi)
=
\diag\Big\{
\lambda^+(x;\xi),
\lambda^-(x;\xi)
\Big\}
\end{equation}
allows to rewrite this system in \textbf{Riemann coordinates}
\begin{equation}\label{RiemannInvariants}
\Riemann(t,x;\xi) \coloneqq
\begin{pmatrix} \Riemann^+(t,x;\xi) \\ \Riemann^-(t,x;\xi) \end{pmatrix}
\coloneqq \Tsteady^{-1}(x;\xi) \Delta y(t,x;\xi).
\end{equation}
Then, the linear balance law~\eqref{EqDeviations} with boundary conditions~\eqref{BC} is equivalent to the boundary value problem
\begin{equation}\label{CauchyZeta}
\begin{aligned}
&\partial_t \Riemann(t,x;\xi) + \Lambdasteady(x;\xi) \partial_x \Riemann(t,x;\xi) = - \SRiemann(x;\xi) \Riemann(t,x;\xi)\\
&\begin{pmatrix} \Riemann^+(t,0;\xi) \\ \Riemann^-(t,L;\xi) \end{pmatrix} 
=
\GRiemann
\begin{pmatrix} \Riemann^+(t,L;\xi) \\ \Riemann^-(t,0;\xi) \end{pmatrix},
\end{aligned}
\end{equation}
where the source term is defined as 
$$
	\SRiemann(x;\xi) \coloneqq \Tsteady^{-1}(x;\xi) S(x;\xi)   \Tsteady(x;\xi) + \Lambda(x;\xi) \Tsteady^{-1}(x;\xi) \partial_x \Tsteady(x;\xi).
$$
The boundary control is described by a matrix~$\GRiemann$, which states the transformed boundary conditions from equation~\eqref{BC}. 
We refer the reader to~\cite[Th.~6.1]{Gerster2019} for more details on this transform and we will state in Section~\ref{SubSecControlRule} an illustrative example. 
Similarly to Figure~\ref{FigNetwork0}, where the boundary control~$\Gphysicslinear$ is stated in physical quantities, Figure~\ref{FigNetwork1} illustrates that the boundary value problem~\eqref{CauchyZeta} is  extendable to multiple arcs.

\vspace{1cm}
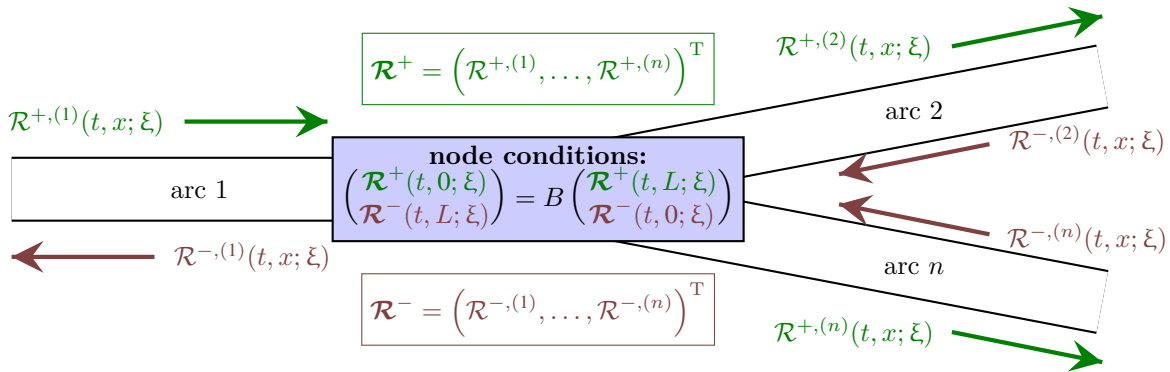
\begin{figure}[h]
	\tikzstyle{block} = [thick,draw, fill=blue!20, rectangle, 
minimum height=3em, minimum width=3em]
\tikzstyle{Kreis} = [thick,draw, fill=blue!20, circle, 
minimum height=3em, minimum width=3em]
\tikzstyle{sum} = [draw, fill=darkgreen!20, rectangle, 
minimum height=2em, minimum width=3em]
\tikzstyle{input} = [coordinate]
\tikzstyle{output} = [coordinate]
\tikzstyle{pinstyle} = [ pin edge={to-,thick,black}]
\tikzstyle{vecArrow} = [thick,double distance=23pt]

\vspace{-1cm}\hspace{-3mm}
\begin{tikzpicture}[scale=0.1]

\draw [vecArrow] (0,0) -- (50,0);
\draw [vecArrow] (145,15) -- (80,2.6);
\draw [vecArrow] (145,-15) -- (80,-2.6);

\node [block,align=center] at (70,0) {\textbf{node conditions:}\\
	$
\begin{pmatrix}\color{gruen} \boldsymbol{\mathcal{R}}^+(t,0;\xi) \\ \color{braun} \boldsymbol{\mathcal{R}}^-(t,L;\xi) \end{pmatrix} 
=
\GRiemann
\begin{pmatrix} \color{gruen} \boldsymbol{\mathcal{R}}^+(t,L;\xi) \\ \color{braun} \boldsymbol{\mathcal{R}}^-(t,0;\xi) \end{pmatrix} 
$
  };

\node at (25,0) {arc 1};
\node at (120,10.5) {arc 2};
\node at (120,-10.5) {arc $n$};

\node at (9.9,9) {\color{gruen}$\Riemann^{+,(1)}(t,x;\xi)$};
\node at (32,-9) {\color{braun}$\Riemann^{-,(1)}(t,x;\xi)$};

\node at (112,-19) {\color{gruen}$\Riemann^{+,(n)}(t,x;\xi)$};
\node at (143,6.5) {\color{braun}$\Riemann^{-,(2)}(t,x;\xi)$};

\node at (112,19) {\color{gruen}$\Riemann^{+,(2)}(t,x;\xi)$};
\node at (143,-6.5) {\color{braun}$\Riemann^{-,(n)}(t,x;\xi)$};

\node at (70,16) {\color{gruen}$
	\boxed{
	\boldsymbol{\mathcal{R}}^+= \Big(\Riemann^{+,(1)},\ldots,\Riemann^{+,(n)} \Big)^\T
}$};

\node at (70,-16) {\color{braun}$
	\boxed{
	\boldsymbol{\mathcal{R}}^-= \Big(\Riemann^{-,(1)},\ldots,\Riemann^{-,(n)} \Big)^\T
}
	$};

\draw [ultra thick, gruen, decoration={markings,mark=at position 1 with	{\arrow[scale=2,>=stealth]{>}}},postaction={decorate}] (	23,9) -- (42,9);
\draw [ultra thick, braun, decoration={markings,mark=at position 1 with	{\arrow[scale=2,>=stealth]{>}}},postaction={decorate}] (19,-9) -- (0,-9);

\draw [ultra thick, gruen, decoration={markings,mark=at position 1 with	{\arrow[scale=2,>=stealth]{>}}},postaction={decorate}]  (125,-19) -- (145,-23);

\draw [ultra thick, braun, decoration={markings,mark=at position 1 with	{\arrow[scale=2,>=stealth]{>}}},postaction={decorate}] (130,-6) -- (110,-2);

\draw [ultra thick, braun, decoration={markings,mark=at position 1 with	{\arrow[scale=2,>=stealth]{>}}},postaction={decorate}] (130,6) -- (110,2);

\draw [ultra thick, gruen, decoration={markings,mark=at position 1 with	{\arrow[scale=2,>=stealth]{>}}},postaction={decorate}]  (125,19) -- (145,23);

\end{tikzpicture}
	\caption{Network with $n$ arcs in Riemann coordinates with linear boundary conditions~$\GRiemann\in\mathbb{R}^{2n\times 2n}$.}
	\label{FigNetwork1}
\end{figure}

\noindent
We note that the eigenvalue decomposition~\eqref{RandomEW} must satisfy
\begin{equation}\label{RandomDistinctEW}
\lambda^-(x;\xi) < 0 < \lambda^+
(x;\xi)
\quad
\text{for all}
\quad
x\in[0,L]
\quad
\text{and}
\quad
\xi \sim \mathbb{P}
\end{equation}
such that the boundary control is applicable. 
In the following, we will transfer this property to a stochastic Galerkin formulation that expresses the random system~\eqref{CauchyZeta} as a sequence of deterministic problems. 
To this end, we replace all random quantities by the gPC approximation~\eqref{NgPC}, i.e.
\begin{alignat*}{8}
&\lambdasteady^\pm(x;\xi)
&&\approx
 \GK\big[\lambdasteady^\pm\big](x;\xi)
 &&=
\sum\limits_{\boldsymbol{k} \in \K }
\boldsymbol{\widehat{\lambda}^\pm_{k}}(x) 
\phi_{\boldsymbol{k}}(\xi), \\
&\SRiemann(x;\xi)
&&\approx
\GK\big[\SRiemann\big](x;\xi)
&&=
\sum\limits_{\boldsymbol{k} \in \K }
\SRhat_{\boldsymbol{k}}(x) 
\phi_{\boldsymbol{k}}(\xi), \\
&\Riemann(t,x;\xi)
&&\approx
\GK\big[\Riemann\big](t,x;\xi)
&&=
\sum\limits_{\boldsymbol{k} \in \K }
\Rhat_{\boldsymbol{k}}(t,x) 
\phi_{\boldsymbol{k}}(\xi).
\end{alignat*}

\noindent
The random system is projected onto the gPC basis~$(\phi_{\boldsymbol{k}})_{\boldsymbol{k}\in\K}$  by the Galerkin method
$$
\Big\langle
\partial_t \GK\big[\Riemann\big](t,x;\cdot) 
+ \GK\big[\Lambdasteady\big](x;\cdot) \,
\partial_x \GK\big[\Riemann\big](t,x;\cdot) 
+
\GK\big[\SRiemann\big](x;\cdot)
 \GK\big[\Riemann\big](t,x;\cdot),
\phi_{\boldsymbol{k}} (\cdot) \Big\rangle_{\mathbb{P}}=0
\quad\text{for all}\quad
\boldsymbol{k} \in \K.
$$
This leads to the \textbf{stochastic Galerkin formulation}
	\begin{align}\label{gPCPDE}
	&\partial_t \Rhat(t,x) + \Ahat(x) \partial_x\Rhat(t,x) =- \Shat(x)\Rhat(t,x)  \quad \text{for} \quad
	\Ahat
	\coloneqq 
	\begin{pmatrix}
	\boldsymbol{\widehat{A}^+} \\
	&\boldsymbol{\widehat{A}^-}
	\end{pmatrix}\\ 
	%
	&
	\begin{aligned}
	&\text{with} \quad
	&\boldsymbol{\widehat{A}^\pm}(x)
	&\coloneqq
	\sum\limits_{\boldsymbol{k} \in \K }
	\boldsymbol{\widehat{\lambda}^\pm_{k}}(x) 
	\Big( \langle
\phi_{\boldsymbol{k}},
	\phi_{\boldsymbol{i}} \phi_{\boldsymbol{j}}
	\rangle_{\mathbb{P}} \Big)_{\boldsymbol{i},\boldsymbol{j}\in\K}\\
	&\text{and} \quad
	&\Shat(x)
	&\coloneqq
	\sum\limits_{\boldsymbol{k} \in \K }
\SRhat_{\boldsymbol{k}}(x) 
\Big( \langle
\phi_{\boldsymbol{k}},
\phi_{\boldsymbol{i}} \phi_{\boldsymbol{j}}
\rangle_{\mathbb{P}} \Big)_{\boldsymbol{i},\boldsymbol{j}\in\K}.
	\end{aligned} \nonumber
	\end{align}

\noindent
Since the matrices~$\boldsymbol{\widehat{A}^\pm}(x)$ are symmetric, there exists an orthogonal eigenvalue decomposition
$$
	\boldsymbol{\widehat{A}^\pm}(x) = \boldsymbol{\widehat{\mathcal{T}}^\pm}(x) \boldsymbol{\widehat{\mathcal{D}}^\pm}(x) \boldsymbol{\widehat{\mathcal{T}}^\pm}(x)^{\T}.
$$
This allows to diagonalize the stochastic Galerkin formulation~\eqref{gPCPDE} by introducing the second class of Riemann invariants
\begin{equation}\label{TrafoCeta}
\Rd(t,x) =
\boldsymbol{\widehat{\mathcal{T}}}(x)^{\textup{T}} \Rhat(t,x)
\quad\text{for}\quad
\boldsymbol{\widehat{\mathcal{T}}}(x)
\coloneqq
\diag
\Big\{
\boldsymbol{\widehat{\mathcal{T}}}^+(x),\boldsymbol{\widehat{\mathcal{T}}}^-(x)\Big\}.
\end{equation}

\noindent
These Riemann invariants are described by the boundary value problem
\begin{align}
&\partial_t \Rd(t,x) + \Dhat(x) \partial_x \Rd(t,x)  = - \Bhat(x) \Rd(t,x), \label{IBVPzeta}\\
&\begin{pmatrix} \ZetaPlus(t,0) \\ \ZetaMinus(t,L) \end{pmatrix} 
=
\Ghat \begin{pmatrix} \ZetaPlus(t,L) \\ \ZetaMinus(t,0) \end{pmatrix} \nonumber\\
&
\text{for}\quad
\Ghat
\coloneqq
\begin{pmatrix}
\boldsymbol{\widehat{\mathcal{T}}^+}(0)\\
&\boldsymbol{\widehat{\mathcal{T}}^-}(L)
\end{pmatrix}^\T
\begin{pmatrix}
B_{1,1} \indikator & B_{1,2} \indikator  \\
B_{2,1} \indikator & B_{2,2} \indikator  \\
\end{pmatrix} 
\begin{pmatrix}
\boldsymbol{\widehat{\mathcal{T}}^+}(L)\\
&\boldsymbol{\widehat{\mathcal{T}}^-}(0)
\end{pmatrix},
 \nonumber 
 \nonumber
\end{align}
where~${\indikator\in\mathbb{R}^{|\K|\times|\K| }}$ denotes the identity matrix. 
The characteristic speeds and the source term read as
\begin{equation*}
\Dhat(x) \coloneqq \diag\Big\{
\boldsymbol{\widehat{\mathcal{D}}^+}(x),\boldsymbol{\widehat{\mathcal{D}}^-}(x)
\Big\}
\quad\text{and}\quad
\Bhat(x)
\coloneqq
\boldsymbol{\widehat{\mathcal{T}}}(x)^\T
\SRhat(x)
\boldsymbol{\widehat{\mathcal{T}}}(x)
+
\Dhat(x)
\boldsymbol{\widehat{\mathcal{T}}}(x)^\T
\partial_x
\boldsymbol{\widehat{\mathcal{T}}}(x).
\end{equation*}

\noindent
Note that solutions to the  systems~\eqref{LyapunovDev1} and~\eqref{EqDeviations}  can also be expanded in terms of polynomial chaos expansions. However, it has been observed in~\cite[Sec.~4.2]{H4} that  the  basis~\eqref{BasisSparse}  may lead to a loss of hyperbolicity. 
In contrast, the stochastic Galerkin formulation~\eqref{gPCPDE} is always hyperbolic. 
To ensure the wellposedness of the  boundary value problem~\eqref{IBVPzeta}, we have to guarantee
\begin{equation}\label{DistinctDeterministicEW}
\boldsymbol{\widehat{\mathcal{D}}^-}(x)<0<\boldsymbol{\widehat{\mathcal{D}}^+}(x)
\quad
\text{for all}
\quad  x\in[0,L].
\end{equation}
Indeed, 
\cite[Th.~2]{Sonday2011} and \cite[Th.~2.1]{H3} state that property~\eqref{RandomDistinctEW} 
implies~\eqref{DistinctDeterministicEW}. Namely, the matrices~$\boldsymbol{\widehat{A}^\pm}$ are strictly positive and negative definite, respectively, since for all
${
	\widehat{y}\neq (0,\ldots,0)^\T 
}$ 
and basis functions~$\phi_{\boldsymbol{k}}$ we have
	$$
	\widehat{y}^\T \boldsymbol{\widehat{A}^\pm}\widehat{y}
	=
	\pm
	\int
	\bigg(
	\sqrt{\big| 
		\GK[\lambdasteady^\pm](x;\xi) 
		\big|}
	\sum\limits_{\boldsymbol{k}\in\K}
	\widehat{y}_{\boldsymbol{k}} \phi_{\boldsymbol{k}}(\xi)
	\bigg)^2
	\d \mathbb{P}.
	$$

\noindent
The wellposedness of the boundary value problem~\eqref{IBVPzeta} follows now directly from~\cite[Th.~A.4]{O1}. More precisely,~\cite[Th.~A.4]{O1} states that the $L^2$-solution 
${
	\Rd \, : \, \mathbb{R}^+_0\times (0,L) \rightarrow  \mathbb{R}^{2|\K|}
}$ 
to the boundary value problem~\eqref{IBVPzeta} 
with initial values~${\Rd(0,\cdot)\in L^2\big((0,L);\mathbb{R}^{2|\K|} \big)}$ 
is a continuous map 
\begin{equation}\label{DeterministicL2Solution}
\Rd \ \ : \ \ \mathbb{R}^+_0 \rightarrow L^2\Big((0,L);\mathbb{R}^{2|\K|}\Big) , \ \ \
t \mapsto \Rd(t,\cdot)
\end{equation}
that is unique and that depends also continuously on initial data. Due to the underlying tensor product structure and the relation~\eqref{TrafoCeta}, i.e.~${
	\Rhat(t,x)
	=
	\boldsymbol{\widehat{\mathcal{T}}}(x)
	\Rd(t,x)
}$, we can  interpret the gPC expansion
\begin{equation}\label{RandomL2Solution}
\GK\big[\Riemann\big]
\ \ : \ \ \mathbb{R}^+_0 \rightarrow
L^2\Big( (0,L);\mathbb{R}^2 \Big) \bigotimes \mathbb{L}^2(\Omega,\mathbb{P}),  \ \ \
t \mapsto \GK\big[\Riemann\big](t,\cdot;\cdot)
=
\sum\limits_{\boldsymbol{k} \in \K }
\Rhat_{\boldsymbol{k}}(t,\cdot) 
\phi_{\boldsymbol{k}}(\cdot)
\end{equation}
as a well-defined \emph{random} $L^2$-solution.


\begin{figure}[h]
		\begin{tikzpicture}[scale=1.6,cap=round]
	
\begin{scope}

\tikzstyle{important line}=[very thick]

\filldraw[fill=white!80!gruen,draw=none] (0,0) -- (1,0.5) arc(30:75:1);
\filldraw[fill=white!80!myred,draw=none] (0,0) -- (-0.2,1) arc(100:130:1);
\filldraw[fill=white!80!gruen,draw=none] (3,0) -- (2.6,1.1) arc(110:180:1);
\filldraw[fill=white!80!myred,draw=none] (3,0) -- (4,0.3) arc(3:70:0.6);

\draw[style=important line] (-0.2,0) -- (3.2,0);
\draw[style=important line,->] (0,-0.2) -- (0,1.4);
\draw[style=important line,->] (3,-0.2) -- (3,1.4);
\draw (0,-0.4) -- (0,-0.4) node {$x=0$};
\draw (3,-0.4) -- (3,-0.4) node {$x=L$};
\draw (1.5,1.85) -- (1.5,1.85) node {\underline{random boundary value problem~\eqref{CauchyZeta}}};


\draw (1.1,1.2) -- (1.1,1.2) node {\color{gruen}$\Riemann^+(t,0;\xi)>0$};
\draw (1.9,0.4) -- (1.9,0.4) node {\color{gruen}$\Riemann^-(t,L;\xi)<0$};
 

%
%


\renewcommand{\arraystretch}{1.25}
\draw (1.47,-1.2) -- (1.47,-1.2) node {
	$\displaystyle
	\boxed{
		\begin{pmatrix} \color{gruen}\Riemann^+(t,0;\xi) \\ \color{gruen}\Riemann^-(t,L;\xi) \end{pmatrix} 
		=
		\GRiemann \begin{pmatrix} 
		\color{myred}\Riemann^+(t,L;\xi) \\ 
		\color{myred}\Riemann^-(t,0;\xi) \end{pmatrix} 
	}	$
};


\draw (4.05,0.2) -- (4.05,0.2) node
{\color{myred}$\Riemann^+(t,L;\xi)>0$};

\draw (-0.35,1.5) -- (-0.35,1.5) node {time $t$};

\end{scope}


\begin{scope}[xshift=5.5cm]

\tikzstyle{important line}=[very thick]


\draw[style=important line] (-0.2,0) -- (3.2,0);
\draw[style=important line,->] (0,-0.2) -- (0,1.4);
\draw[style=important line,->] (3,-0.2) -- (3,1.4);

\draw (0,-0.4) -- (0,-0.4) node {$x=0$};
\draw (3,-0.4) -- (3,-0.4) node {$x=L$};


\draw[thick,gruen] (0,0) -- (0.93,0.53);
\draw[thick,gruen] (0,0) -- (0.75,0.75);
\draw[thick,gruen] (0,0) -- (0.5,0.9);
\draw[thick,dashed,myred] (0,0) -- (-0.25,0.98);
\draw[thick,dashed,myred] (0,0) -- (-0.40,0.95);
\draw[thick,dashed,myred] (0,0) -- (-0.55,0.90);

\draw[thick,gruen] (3,0) -- (2,0.45);
\draw[thick,gruen] (3,0) -- (2.2,0.8);
\draw[thick,gruen] (3,0) -- (2.5,1);
\draw[thick,dashed,myred] (3,0) -- (3.99,0.35);
\draw[thick,dashed,myred] (3,0) -- (3.85,0.6);
\draw[thick,dashed,myred] (3,0) -- (3.6,0.8);

\draw (0.7,1.2) -- (0.7,1.2) node {\color{gruen}$\ZetaPlus(t,0) >0$};
\draw (2.3,1.2) -- (2.3,1.2) node {\color{gruen}$\ZetaMinus(t,L) <0$};


\draw (-0.8,1.2) -- (-0.8,1.2) node {\color{myred}$\ZetaMinus(t,0) <0$};

\draw (1.5,1.85) -- (1.5,1.85) node {\underline{deterministic boundary value problem~\eqref{IBVPzeta}}};

\draw (-0.35,1.5) -- (-0.35,1.5) node {time $t$};

\draw (1.47,-1.2) -- (1.47,-1.2) node {
	$\displaystyle
	\boxed{
		\begin{pmatrix} \color{gruen}\ZetaPlus(t,0) \\ \color{gruen}\ZetaMinus(t,L) \end{pmatrix} 
		=
		\Ghat\begin{pmatrix} \color{myred} \ZetaPlus(t,L) \\ \color{myred} \ZetaMinus(t,0) \end{pmatrix} 
	}
	$
};

\end{scope}








\end{tikzpicture}
	\caption{Random (left) and deterministic (right) boundary value problems~\eqref{CauchyZeta} and~\eqref{IBVPzeta} for~$|\K|=3$. Characteristic speeds that point into the spatial domain are shown in green and those pointing out are shown in red, respectively.}
	\label{FIGrandomBC}
\end{figure}
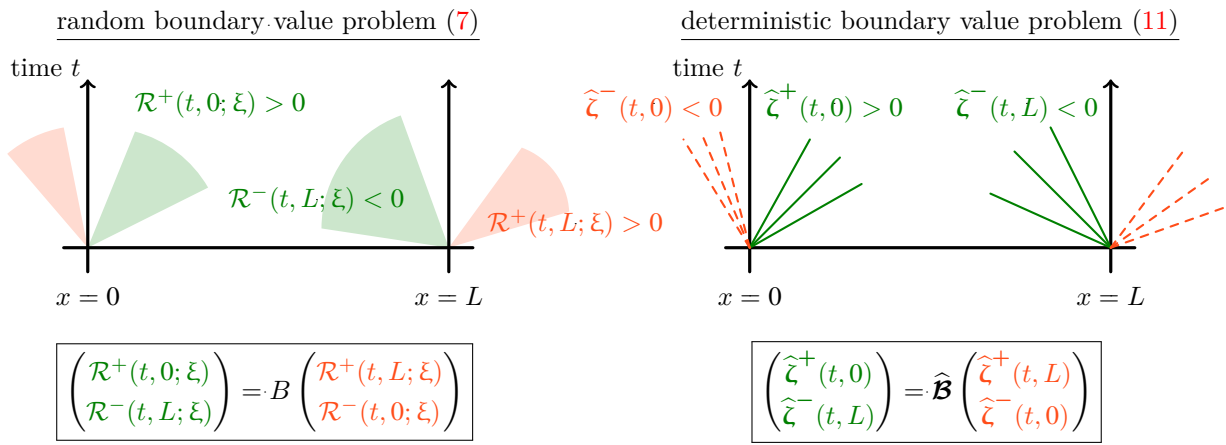

\noindent
Figure~\ref{FIGrandomBC} summarizes the previous analysis in terms of specified boundary conditions. 
The left panel shows 
for the random boundary value problem~\eqref{CauchyZeta} 
the domain of possible realizations of 
the characteristic curves, along which the  realizations~$\Riemann^\pm\big(t,x;\xi(\omega)\big)$ 
are constant. 
The domain corresponding to those curves that point into the spatial domain and influence the solution is shown in green. The domain corresponding to those pointing out is shown in red, respectively.
Likewise, the deterministic solution~$\Rd(t,x)$ to the system~\eqref{IBVPzeta} is constant on the set of characteristic curves as illustrated in the right panel. 
The corresponding boundary conditions, illustrated below in Figure~\ref{FIGrandomBC}, specify the new values (green) that propagate from the boundary into the domain in terms of those that go out of the domain (red).

The consideration of all possible realizations gives rise to a~\emph{domain}, where characteristic curves are located. 
This poses a challenge in the analysis, since  
the feasibility of the boundary control must be ensured for all realizations~\cite{Schuster2019} and an appropriate stabilization concept must be introduced for the random systems.
This challenge can be partially circumvented through a concept based on 
a function space of view, which is motivated by the fact that 
the $L^2$-solutions~\eqref{RandomL2Solution} and \eqref{DeterministicL2Solution} contribute to the mean squared error~\eqref{MSE} the same information. 
The solution~\eqref{DeterministicL2Solution}  results for~${K\rightarrow\infty}$ in  infinite, but~\emph{countably many} characteristic curves. 
We will show in the following section that this allows for a deterministic stability analysis.



\begin{remark}\label{Remark1}
	We remark that condition~\eqref{RandomDistinctEW} on the random characteristic speeds is sufficient, but not necessary. 	Consider a polynomial chaos approximation~\eqref{NgPC} with Hermite polynomials and  truncation~${K=1}$ that contains the Karhunen-Lo\`{e}ve decomposition~\eqref{KL} for a Gaussian process as special case~\cite{Rasmussen,S4}, i.e.
	\begin{equation}\label{KLvsPC}
	\GK\big[\lambdasteady^+\big]\big(x;\xi(\omega)\big)
	=
	\mathcal{K}\big[\lambdasteady^+\big](x;\omega)
	=
	\mathbb{E}\big[
	\lambdasteady^+(x;\xi)\big] 
	+
	\sum\limits_{k=1}^{\NKL}
	\sqrt{\lambdaKL_k} \psi_k(x)
	\xi_k(\omega).
	\end{equation}
	By exploiting the orthogonality $\langle \xi_k,\xi_\ell \rangle_{\mathbb{P}}=\delta_{k,\ell}$  
	and by using a lexicographic ordering~\cite[Ch.~5]{S17} of the multi-indices~${\boldsymbol{k}\in\K}$ we obtain 
	the matrix~$\boldsymbol{\widehat{A}^+}$ in the stochastic Galerkin formulation~\eqref{gPCPDE} as
	\renewcommand{\arraystretch}{1.2}
	$$
	\boldsymbol{\widehat{A}^+}
	\coloneqq
	\begin{pmatrix}
	\mathbb{E}\big[
	\lambdasteady^+(x;\xi)\big] 
	& \sqrt{\lambdaKL_1} \psi_1(x)
	&\cdots
	& \sqrt{\lambdaKL_{\NKL}} \psi_{\NKL}(x) \\
	\sqrt{\lambdaKL_{1}} \psi_{1}(x) & \mathbb{E}
	\big[
	\lambdasteady^+(x;\xi)\big] \\
	\vdots & & \ddots \\
	\sqrt{\lambdaKL_{\NKL}} \psi_{\NKL}(x) 
	& & &\mathbb{E}\big[\lambdasteady^+(x;\xi)\big] 
	\end{pmatrix} \in \mathbb{R}^{(M+1)\times(M+1)}. 
	$$
	
	\renewcommand{\arraystretch}{1}

	\noindent
	It is strictly positive definite according to Gershgorin circle theorem provided that 
	\begin{equation}\label{AssumptionKL}
	\mathbb{E}
	\big[
	\lambdasteady^+(x;\xi)\big] 
	>
	\sum\limits_{k=1}^{\NKL}
	\Big| \sqrt{\lambdaKL_{k}} \psi_{k}(x) \Big|
	\end{equation}
	is satisfied. This assumption is  not restrictive if the deviations from the mean of the stochastic process are sufficiently small. 
	We note that the mean and the variance are directly given by the gPC modes for normalized gPC expansions as
	\begin{align*}
	\mathbb{E}\Big[
	\GK\big[
	\lambdasteady^+\big](x;\xi) 
	\Big]
	&=
	\boldsymbol{\widehat{\lambda}^+}_{\zeros}(x),
	\quad
	\boldsymbol{\zeros}\coloneqq
	(0,\ldots,0)^\T,\\
	\mathbb{V}\Big[
	\GK\big[
	\lambdasteady^+\big](x;\xi) 
	\Big]
	&=
	\mathbb{E}\Big[
	\GK\big[
	\lambdasteady^+\big](x;\xi)^2
	\Big]
	-
	\mathbb{E}\Big[
	\GK\big[
	\lambdasteady^+\big](x;\xi) 
	\Big]^2 \\
	&=
	\sum\limits_{\boldsymbol{k},\boldsymbol{\ell} \in \K}
	\boldsymbol{\widehat{\lambda}^+_{k}}(x)
	\boldsymbol{\widehat{\lambda}^\pm_{\ell}}(x)
	\langle \phi_{\boldsymbol{k}}, \phi_{\boldsymbol{\ell}} \rangle_{\mathbb{P}}
	-
	\boldsymbol{\widehat{\lambda}^+_{\zeros}}(x)^2 \\
	&=
	\sum\limits_{\boldsymbol{k} \in \K\backslash\{
		\boldsymbol{\zeros} \} }
	\boldsymbol{\widehat{\lambda}^+_{k}}(x)^2.
	\end{align*}
	Therefore, 
	property~\eqref{AssumptionKL} yields
	\begin{align*}
	\mathbb{E}
	\Big[
	\mathcal{K}\big[\lambdasteady^+\big](x;\omega) \Big]
	&>
	\sum\limits_{k=1}^{\NKL}
	\big| \sqrt{\lambdaKL_{k}} \psi_{k}(x) \big|
	\geq
	\sqrt{
		\sum\limits_{k=1}^{\NKL}
		\lambdaKL_{k} \psi_{k}^2(x)
	} 
	=
	\mathbb{V}
	\Big[
	\mathcal{K}\big[\lambdasteady^+\big](x;\omega) \Big]^{\nicefrac{1}{2}}
	\end{align*}
	and the coefficient of variation (\textup{CV}) must satisfy
	$$
	\textup{CV}
	\Big[
	\mathcal{K}\big[\lambdasteady^+\big](x;\omega) \Big]
	\coloneqq
	\frac{
		\mathbb{V}
		\Big[
		\mathcal{K}\big[\lambdasteady^+\big](x;\omega) \Big]^{\nicefrac{1}{2}}	
	}{
		\mathbb{E}
		\Big[
		\mathcal{K}\big[\lambdasteady^+\big](x;\omega) \Big]
	}
	<1
	\quad\text{for all}\quad 
	x\in[0,L].
	$$

	\noindent
	In fact, the truncation~${K<\infty}$ acts as a dimension reduction that makes the necessary conditions less restrictive. 
	However, we note that for sufficiently large~${K\in\mathbb{N}_0}$ the matrix~$
	\boldsymbol{\widehat{A}^+}
	$ has both positive and negative eigenvalues if the random variable~$\GK\big[ \lambdasteady^+(x;\xi)\big]$ has both 
	positive and negative realizations~\cite[Th.~2.1]{H3}. 
	Since the Gaussian distribution involves realizations on the whole real line,  the $\gamma$-distribution has been proposed in~\cite{S19,S17} as a ``truncated Gaussian model''. 
	Then, positivity can be ensured for all realizations and for all truncations~${K\in\mathbb{N}_0}$.

\end{remark}

\section{Lyapunov stability analysis}
\label{Sec4}

We design a boundary control to diminish the mean squared error~\eqref{MSE} exponentially fast over time. Due to relation~\eqref{RiemannInvariants} it suffices to guarantee that the stochastic Riemann invariants decay exponentially fast, i.e.
$$
 \mathbb{E} \bigg[
\big\lVert \GK\big[\Riemann\big](t,\cdot;\xi)
\big\rVert^2_{L^2} \bigg]
\leq
c\, e^{-\mu t} \,
 \mathbb{E} \bigg[
\big\lVert \GK\big[\Riemann\big](0,\cdot;\xi)
\big\rVert^2_{L^2} \bigg]
$$
with decay rate~${\mu>0}$ and positive constant~${c>0}$. 
By exploiting the orthogonality 
$
\boldsymbol{\widehat{\mathcal{T}}}^{\textup{T}}
=
\boldsymbol{\widehat{\mathcal{T}}}^{-1}
$ 
we obtain the relation
\begin{equation}\label{stochasticStability}
 \mathbb{E} \bigg[
\big\lVert \GK\big[\Riemann\big](t,\cdot;\xi)
\big\rVert^2_{L^2} \bigg]
=
\big\lVert
\Rhat(t,\cdot)\big\rVert^2_{L^2}
= 
\big\lVert
\boldsymbol{\widehat{\mathcal{T}}}(\cdot)
\Rd(t,\cdot)
\big\rVert^2_{L^2}
=
\big\lVert
\Rd(t,\cdot)
\big\rVert^2_{L^2},
\end{equation}
where the last equality makes the~$L^2$-norm essential. 
We observe that the stochastic stabilization follows from the deterministic stabilization of the augmented system~\eqref{IBVPzeta}. 
These quantities do not have a physical meaning. 
However, the derived control will be formulated in the original quantities, which leads to an explicit and implementable feedback control.  
The proof of the following theorem is a modification of the Lyapunov stability analysis in~\cite[Sec.~5]{O1}.


\begin{theorem}\label{TheoremStability}

Define the weights~${
W(x)\coloneqq \diag \big\{ W^+(x),W^-(x)  \big\}}$, 
${
W^\pm(x) \coloneqq \diag \big\{ w_1^\pm(x),\ldots,w_{|\K|}^\pm(x)\big\}
}$ by
\begin{equation}\label{Weights}
\begin{aligned}
w^+_{k}(x)
&\coloneqq
\frac{h^+_{k}}{ \DhatPlus_{k} (x)} \textup{exp}\bigg( -\mub \int_{0}^{x} \frac{1}{\DhatPlus_{k}(s)} \d s \bigg), 
\\
w_{k}^-(x)
&\coloneqq
\frac{h_{k}^-}{ \big|\DhatMinus_{k}(x)\big|} \textup{exp}\bigg( \mub \int_{x}^{L} \frac{1}{\DhatMinus_{k}(s)} \d s \bigg) \\
\end{aligned}
\end{equation}
for $k=1,\ldots,|\K|$ and assume given positive values~${\mub,h^+_k, h^-_k>0}$ that satisfy the \textbf{dissipativity condition}
\begin{align}
&e^{\mub \frac{L}{2 \lambda_{\min} }} \big\lVert \mathcal{D} \Ghat \mathcal{D}^{-1} \big\lVert_2\leq 1  \tag{$\mathcal{D}$}\label{ThInequality2} \\
&\begin{aligned}
&	\text{for } \ \
&	\lambda_{\min} &\coloneqq 
\min\limits_{\stackrel{x \in [0,L],}{k=1,\ldots,|\K|}}
\Big\{ \DhatPlus_{k}(x) ,\big|\DhatMinus_{k}(x)\big| \Big\} \\
&\text{and}\quad
&	\mathcal{D} &\coloneqq \diag\Big\{
h^+_1,\ldots,h^+_{|\K|},\ h^-_1,\ldots,h^-_{|\K|}
\Big\}.
\end{aligned}
\nonumber
\end{align}
	Here,~$\lVert A \rVert_2\coloneqq 
	\largestEV
	\big\{ A^\T A \big\}^{\nicefrac{1}{2}}
	$
	denotes the~spectral norm of a matrix  with largest absolute eigenvalue~$\largestEV$.  
Then, the mean squared 
 gPC expansion, corresponding to the boundary value problem~\eqref{CauchyZeta},  satisfies
\begin{align}
&\mathbb{E} \bigg[
\Big\lVert \GK\big[\Riemann\big](t,\cdot;\xi)
\Big\rVert^2_{L^2} \bigg]
\leq
c\, e^{-\mu t}
\,
\mathbb{E} \bigg[
\Big\lVert \GK\big[\Riemann\big](0,\cdot;\xi)
\Big\rVert^2_{L^2} \bigg] \nonumber \\
&\text{for}\quad
\mu=
\mub 
+ 
\min_{x \in [0,L]}\Big\{ \smallestEV \big\{ 
W(x)\Bhat(x)+\Bhat(x)^{\T}W(x) 
\big\} \Big\},
\label{guaranteedDecayRate}
\end{align}
where~$\smallestEV$ denotes the smallest eigenvalue of the space-varying, symmetric matrix~$
W\Bhat+\Bhat^{\T}W$.


\end{theorem}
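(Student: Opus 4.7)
The overall strategy is a Lyapunov argument on the deterministic augmented system~\eqref{IBVPzeta}, transferred back to the stochastic setting via the isometry~\eqref{stochasticStability}. By~\eqref{stochasticStability} the mean squared gPC expansion coincides with $\lVert\Rd(t,\cdot)\rVert_{2}^{2}$, so it suffices to prove a deterministic estimate $\lVert\Rd(t,\cdot)\rVert_{2}^{2}\le c\,e^{-\mu t}\lVert\Rd(0,\cdot)\rVert_{2}^{2}$. To this end I would introduce the weighted Lyapunov candidate
\begin{equation*}
L(t)\coloneqq \int_{0}^{L}\Rd(t,x)^{\T}W(x)\Rd(t,x)\,\textup{d}x,
\end{equation*}
whose diagonal, strictly positive and continuous weights yield constants $c_{1},c_{2}>0$ with $c_{1}\lVert\Rd\rVert_{2}^{2}\le L(t)\le c_{2}\lVert\Rd\rVert_{2}^{2}$, so that exponential decay of $L$ with rate $\mu$ implies the same rate for $\lVert\Rd\rVert_{2}^{2}$ up to the constant $c=c_{2}/c_{1}$.

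Differentiating $L$ along~\eqref{IBVPzeta} and integrating by parts in~$x$ produces
\begin{equation*}
\dot{L}(t)=-\bigl[\Rd^{\T}W\Dhat\Rd\bigr]_{0}^{L}+\int_{0}^{L}\Rd^{\T}\partial_{x}(W\Dhat)\Rd\,\textup{d}x-\int_{0}^{L}\Rd^{\T}\bigl(W\Bhat+\Bhat^{\T}W\bigr)\Rd\,\textup{d}x.
\end{equation*}
The weight formulas~\eqref{Weights} are engineered so that $w_{k}^{+}(x)\,\DhatPlus_{k}(x)=h_{k}^{+}\,\textup{exp}\bigl(-\mub\int_{0}^{x}\textup{d}s/\DhatPlus_{k}(s)\bigr)$, with an analogous identity for the minus block; a direct differentiation therefore gives the key identity $\partial_{x}(W\Dhat)=-\mub W$, and the interior term contributes exactly $-\mub L(t)$. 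For the source term, symmetry of $W\Bhat+\Bhat^{\T}W$ yields the pointwise bound $\Rd^{\T}(W\Bhat+\Bhat^{\T}W)\Rd\ge\smallestEV\{W\Bhat+\Bhat^{\T}W\}\,|\Rd|^{2}$, which contributes a further $-\alpha L(t)$ with $\alpha\coloneqq\min_{x}\smallestEV\{W\Bhat+\Bhat^{\T}W\}$ once the norm equivalence is absorbed into the multiplicative constant~$c$.

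The main obstacle is the boundary contribution. Splitting $\Rd=(\ZetaPlus,\ZetaMinus)^{\T}$, evaluating~\eqref{Weights} at the endpoints, and using $\DhatMinus<0$ rearrange the boundary term to
\begin{equation*}
-\bigl[\Rd^{\T}W\Dhat\Rd\bigr]_{0}^{L}=Z_{\textup{in}}^{\T}\mathcal{D}\,Z_{\textup{in}}-Z_{\textup{out}}^{\T}\mathcal{D}\Lambda\,Z_{\textup{out}},
\end{equation*}
where $Z_{\textup{in}}=(\ZetaPlus(t,0),\ZetaMinus(t,L))^{\T}$, $Z_{\textup{out}}=(\ZetaPlus(t,L),\ZetaMinus(t,0))^{\T}$, and $\Lambda$ is the diagonal matrix of boundary exponentials, each bounded below by $e^{-\mub L/\lambda_{\min}}$ because $\DhatPlus\ge\lambda_{\min}\indikator$ and $-\DhatMinus\ge\lambda_{\min}\indikator$. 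Substituting the boundary relation $Z_{\textup{in}}=\Ghat\,Z_{\textup{out}}$ from~\eqref{IBVPzeta} reduces the desired nonpositivity to the semidefinite inequality between $\Ghat^{\T}\mathcal{D}\Ghat$ and $e^{-\mub L/\lambda_{\min}}\mathcal{D}$, which after conjugation by $\mathcal{D}^{-1}$ is precisely the dissipativity condition~\eqref{ThInequality2}.

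Assembling the three contributions gives $\dot{L}(t)\le-(\mub+\alpha)L(t)$; Gronwall's lemma then yields $L(t)\le e^{-\mu t}L(0)$, and the norm equivalence together with~\eqref{stochasticStability} delivers the claimed bound on the mean squared gPC expansion with constant $c=c_{2}/c_{1}$. The hard part will be executing the boundary step cleanly: one must verify that the interplay between the explicit exponential weights, the uniform lower bound $\lambda_{\min}$, and the matrix-norm formulation of~\eqref{ThInequality2} really kills the boundary residual, rather than merely controlling it up to constants that would degrade the advertised decay rate.
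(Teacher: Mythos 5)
Your proposal follows essentially the same route as the paper: the weighted Lyapunov function $\L(t)=\int_0^L\Rd(t,x)^{\T}W(x)\Rd(t,x)\,\d x$, integration by parts, the identity $\partial_x\big(W(x)\Dhat(x)\big)=-\mub\,W(x)$ engineered into the weights~\eqref{Weights}, reduction of the boundary residual to the dissipativity condition~\eqref{ThInequality2} (which the paper packages as negative semidefiniteness of the matrix~$\H$ in~\eqref{HLyapunov}, citing~\cite{Gerster2019}), and Gronwall. Two gaps remain. First, the regularity step is missing: the solutions of~\eqref{IBVPzeta} are only $L^2$-solutions in the sense of~\eqref{DeterministicL2Solution}, so $\L$ cannot be differentiated and integrated by parts directly. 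The paper closes this by approximating the initial data with a $C^1$ sequence compatible with the boundary conditions, invoking~\cite[Th.~A.1]{O1} for the regularity of the corresponding solutions, proving the decay estimate for each member of the sequence, and passing to the limit $j\rightarrow\infty$. Your argument is only the formal computation.

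Second, your extraction of the decay rate from the interior term would fail as written. You bound $\Rd^{\T}(W\Bhat+\Bhat^{\T}W)\Rd\geq\smallestEV\{W\Bhat+\Bhat^{\T}W\}\,\lvert\Rd\rvert^2$ and propose to absorb the mismatch between $\lvert\Rd\rvert^2$ and the weighted form $\Rd^{\T}W\Rd$ ``into the multiplicative constant $c$.'' That is not possible: a change in the coefficient of $\L(t)$ in the differential inequality changes the \emph{exponent}, and $e^{-\mu' t}$ with $\mu'\neq\mu$ is not $c\,e^{-\mu t}$ for any constant $c$; at best you obtain a rate degraded by the condition number of $W$. To run Gronwall with the advertised rate you must bound the interior integrand relative to the weight itself, i.e., work with $\M(x)=\mub W(x)+W(x)\Bhat(x)+\Bhat(x)^{\T}W(x)$ and the quantity $\min_{x}\smallestEV\big\{W^{-\nicefrac{1}{2}}(x)\M(x)W^{-\nicefrac{1}{2}}(x)\big\}$, which yields $\int_0^L\Rd^{\T}\M\Rd\,\d x\geq\mu\,\L(t)$ directly; this is exactly the step the paper takes. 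The norm-equivalence constants $c_1,c_2$ then enter only once, when converting the final estimate on $\L$ back to $\lVert\Rd(t,\cdot)\rVert_2^2$ via~\eqref{stochasticStability}, where they legitimately produce the prefactor $c$. Your treatment of the boundary term, by contrast, is sound: the incoming weights equal $h_k^{\pm}$ exactly, the outgoing ones are bounded below by $h_k^{\pm}e^{-\mub L/\lambda_{\min}}$, and conjugating the resulting semidefinite inequality by the diagonal matrix of the $h_k^{\pm}$ recovers~\eqref{ThInequality2}.
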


\begin{proof}[Proof (Similarly to~{\cite[Sec.~5]{O1}})]
	
The weights~\eqref{Weights} define the Lyapunov function
\begin{equation}\label{Lyapunov}
\L(t) \coloneqq \int_0^L \Rd(t,x)^{\T} W(x) \Rd(t,x) \d x.
\end{equation}
To prove its exponential decay, we define the matrices
\begin{align}
\H &\coloneqq
\Ghat^{\T} 
\begin{pmatrix}
W^+(0) \boldsymbol{\widehat{\mathcal{D}}}^+(0) \\ & W^-(L) \big| \boldsymbol{\widehat{\mathcal{D}}}^-(L) \big|  
\end{pmatrix}
\Ghat 
 -
\begin{pmatrix}
W^+(L) \boldsymbol{\widehat{\mathcal{D}}}^+(L) \\ & W^-(0) \big| \boldsymbol{\widehat{\mathcal{D}}}^-(0) \big|  
\end{pmatrix}, \label{HLyapunov} \\ 
\M(x) &\coloneqq
-\frac{\partial}{\partial x}
\Big( W(x)\boldsymbol{\widehat{\mathcal{D}}}(x) \Big)
+W(x)\Bhat(x)+\Bhat(x)^{\T}W(x). \label{MLyapunov}
\end{align}

\noindent
It has been shown in~\cite{Gerster2019} that the matrix~$\mathcal{H}$ is negative semidefinite if inequality~\eqref{ThInequality2} holds.  
For now, we assume 
\begin{equation} \label{ToStrongRegularity}
\Rd \in C^1\Big([0,\infty) \times [0,L];\mathbb{R}^{2|\K|}\Big).
\end{equation}
Since the matrix~$2W\Bhat$ and the symmetric matrix~$W\Bhat+\Bhat^{\T}W$ represent the same quadratic form, i.e.
$$
\Rd(t,x)^\T\ W(x)\Bhat(x) \ \Rd(t,x)
=
\Rd(t,x)^\T \
\frac{
W(x)\Bhat(x)+\Bhat(x)^{\T}W(x)}{2}\
\Rd(t,x),
$$
the time derivative of the Lyapunov function is
\begin{alignat}{8}
& \L'(t)
&&=&& 2 \int_0^L \Rd(t,x)^{\T}W(x)\partial_t \Rd(t,x) \d x \nonumber \\
& &&= 
-&&2 \int_0^L \Rd(t,x)^{\T}W(x)\boldsymbol{\widehat{\mathcal{D}}}(x)\partial_x\Rd(t,x) \d x 
-2 \int_0^L \Rd(t,x)^{\T}W(x)\Bhat(x)\Rd(t,x) \d x \nonumber \\
& &&= &&
\int_0^L \Rd(t,x)^{\T} \bigg[
\frac{\partial}{\partial x} \Big( W(x)\boldsymbol{\widehat{\mathcal{D}}}(x) \Big)
-
W(x)\Bhat(x)-\Bhat(x)^{\T}W(x)
\bigg] 
\Rd(t,x) \d x \nonumber \\
& && &&- \Big[
\Rd(t,L)^{\T}W(L)\boldsymbol{\widehat{\mathcal{D}}}(L) \Rd(t,L)
-
\Rd(t,0)^{\T}W(0)\boldsymbol{\widehat{\mathcal{D}}}(0) \Rd(t,0)
\Big]. \label{IntroStabiBCgeneral1}
\end{alignat}

\noindent
Using the linear feedback boundary conditions, the boundary terms~\eqref{IntroStabiBCgeneral1} read as

\begin{alignat*}{8}
& &&-&& \ \Big[
\Rd(t,L)^{\T}W(L)\boldsymbol{\widehat{\mathcal{D}}}(L) \Rd(t,L)
\, - \,
\Rd(t,0)^{\T}W(0)\boldsymbol{\widehat{\mathcal{D}}}(0) \Rd(t,0) \Big] \\
&= && &&
\begin{pmatrix} \Rd^+(t,0) \\ \Rd^-(t,L) \end{pmatrix}^{\T}
\begin{pmatrix}
W^+(0)\boldsymbol{\widehat{\mathcal{D}}}^+(0) \\ & - W^-(L)\boldsymbol{\widehat{\mathcal{D}}}^-(L)
\end{pmatrix}	
\begin{pmatrix} \Rd^+(t,0) \\ \Rd^-(t,L) \end{pmatrix}\\
& &&- &&
\begin{pmatrix} \Rd^+(t,L) \\ \Rd^-(t,0) \end{pmatrix}^{\T}
\begin{pmatrix}
W^+(L)\boldsymbol{\widehat{\mathcal{D}}}^+(L) \\ & - W^-(0)\boldsymbol{\widehat{\mathcal{D}}}^-(0)
\end{pmatrix}	
\begin{pmatrix} \Rd^+(t,L) \\ \Rd^-(t,0) \end{pmatrix} \\
&= && &&
\begin{pmatrix} \Rd^+(t,L) \\ \Rd^-(t,0)  \end{pmatrix}^{\T} 
\H
\begin{pmatrix} \Rd^+(t,L) \\ \Rd^-(t,0)  \end{pmatrix}.
\end{alignat*}

\noindent
We recall that $\smallestEV$ denotes the smallest eigenvalue of a matrix  and we recall that 
the matrix~$\H$ is negative semidefinite. Then, we obtain the estimates
\begin{align*}
\L'(t)
&=
\begin{pmatrix} \Rd^+(t,L) \\ \Rd^-(t,0)  \end{pmatrix}^{\T} 
\H
\begin{pmatrix} \Rd^+(t,L) \\ \Rd^-(t,0)  \end{pmatrix}
-
\int_0^L \Rd(t,x)^{\T} \M(x) \Rd(t,x)  \d x \\
&\leq-
\min_{x \in [0,L]}\Big\{ \smallestEV \big\{ W^{-\nicefrac{1}{2}}(x) \M(x) W^{-\nicefrac{1}{2}}(x) \big\} \Big\}
\L(t) \\
\Longrightarrow \quad
\L(t) & \leq e^{-\mu t} \L(0),\quad
\mu \coloneqq \min_{x \in [0,L]}\Big\{ \smallestEV \big\{ W^{-\nicefrac{1}{2}}(x) \M(x) W^{-\nicefrac{1}{2}}(x) \big\} \Big\}.
\end{align*}
Due to~${
	\partial_x
\big( W(x)\boldsymbol{\widehat{\mathcal{D}}}(x) \big)
=-\mub W(x)
}$ 
the guaranteed decay rate~\eqref{guaranteedDecayRate} is obtained. \\

\noindent
So far, we have assumed 
differentiable solutions~\eqref{ToStrongRegularity} 
that satisfy boundary conditions. 
By following the approach in~\cite[Sec.~2.1]{O1}, the previous analysis can be extended to general $L^2$-solutions. 
Since initial values ${\Rd(0,x) \in C^1\big( (0,L);\mathbb{R}^{2|\K|} \big) }$
	are dense in~$L^2\big( (0,L);\mathbb{R}^{2|\K|} \big)$, 
 there is a differentiable sequence~
$$
\Rd^{(j)}(0,x)
 \in C^1\Big( (0,L);\mathbb{R}^{2|\K|} \Big) 
$$
that converges to initial values~$\Rd(0,x)$ in~$L^2\big( (0,L);\mathbb{R}^{2|\K|} \big) $. 
This sequence vanishes for~${x=0}$,  ${x=L}$ and satisfies the linear boundary conditions. It has been shown in~\cite[Th.~A.1]{O1} that also the solution for~${t>0}$ is differentiable and satisfies
	\begin{equation*}
	\Rd^{(j)} \in  
	C^1\Big( [0,\infty);L^2\big( (0,L)^{2|\K|} \big) \Big) 
	\bigcap
	C^0\Big( [0,\infty);H^1\big( (0,L)^{2|\K|} \big) \Big),
	\end{equation*}
	where $H^1$ denotes a Sobolev space. 
	This regularity is sufficient to obtain the sequence of estimates
	$$
	\L^{(j)}(t) \leq e^{-\mu t} \L^{(j)}(0) 
	\quad \text{for} \quad
	\L^{(j)}(t) \coloneqq \int_0^L \Rd^{(j)}(t,x)^{\T} W(x) \Rd^{(j)}(t,x) \d x, 
	$$
which implies~${\L(t) \leq e^{-\mu t} \L(0) }$ for~${j\rightarrow \infty}$.

\end{proof}

\noindent
Finally, we state a sufficient condition to ensure the dissipativity condition, which can be verified analytically.

\begin{corollary}\label{CorDissipativityCond}
	The dissipativity condition~\eqref{ThInequality2} holds if the boundary control~$\GRiemann$ satisfies
$$
e^{\mub \frac{L}{2 \lambda_{\min} }} \big\lVert \GRiemann  \big\rVert_2\leq 1 
\quad\text{for}\quad
\lambda_{\min}
\coloneqq
\min_{\stackrel{q=1,\ldots,Q,}{x \in [0,L]}}
\bigg\{ 
\GK\big[\lambdasteady^+\big]\big(x;\xi^{(q)}\big),\, 
\Big| \GK\big[\lambdasteady^-\big]\big(x;\xi^{(q)}\big)
\Big| \bigg\},
$$
where~$\xi^{(1)}, \ldots, \xi^{(Q)} $ denote~${Q\coloneqq \big\lceil \frac{3}{2}(K+1) \big\rceil}$  
Gaussian quadrature nodes.

\end{corollary}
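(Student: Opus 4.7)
The plan is to factor \eqref{ThInequality2} and bound each piece separately: show that $\|\mathcal{D}\Ghat\mathcal{D}^{-1}\|_2 \leq \|\GRiemann\|_2$ for a convenient choice of the free scaling weights $h_k^\pm$ (which are parameters of Theorem~\ref{TheoremStability} but not fixed by the corollary), and show that the $\lambda_{\min}$ appearing in \eqref{ThInequality2} dominates the quadrature-based $\lambda_{\min}$ of the corollary. Multiplying these inequalities with the corollary's hypothesis then yields \eqref{ThInequality2}.

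For the matrix norm, I would take $h_k^+=h_k^-=1$ so that $\mathcal{D}=\indikator$ and the factor reduces to $\|\Ghat\|_2$. The two outer block-diagonal factors of $\Ghat$ are built from the orthogonal matrices $\boldsymbol{\hat{\mathcal{T}}^\pm}$ and hence have unit spectral norm, while the inner factor with blocks $B_{p,q}\indikator_{|\K|}$ is the Kronecker product $\GRiemann\otimes\indikator_{|\K|}$, whose spectral norm equals $\|\GRiemann\|_2$. Collecting these observations gives $\|\Ghat\|_2=\|\GRiemann\|_2$.

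For the eigenvalue bound, the Galerkin formula rewrites as $\boldsymbol{\hat{A}^+}_{\boldsymbol{i},\boldsymbol{j}}(x)=\int \GK[\lambdasteady^+](x;\xi)\,\phi_{\boldsymbol{i}}(\xi)\phi_{\boldsymbol{j}}(\xi)\,\d\mathbb{P}(\xi)$, whose integrand is a polynomial of degree at most $K+2K=3K$ in $\xi$. Since $2Q-1\geq 3K$ for $Q=\lceil 3(K+1)/2\rceil$, Gaussian quadrature of this size evaluates the expression exactly. Writing $\Phi_{q,\boldsymbol{k}}\coloneqq\phi_{\boldsymbol{k}}(\xi^{(q)})$, $W\coloneqq\diag(w_q)$ and $D^+(x)\coloneqq\diag\big(\GK[\lambdasteady^+](x;\xi^{(q)})\big)$, exactness applied to the orthonormality relations $\langle \phi_{\boldsymbol{i}},\phi_{\boldsymbol{j}}\rangle_{\mathbb{P}}=\delta_{\boldsymbol{i},\boldsymbol{j}}$ yields $\Phi^{\T}W\Phi=\indikator$, and applied to the Galerkin formula yields $\boldsymbol{\hat{A}^+}(x)=\Phi^{\T}WD^+(x)\Phi$. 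For any unit vector $\hat{y}\in\mathbb{R}^{|\K|}$, positivity of the Gauss weights then gives
\begin{equation*}
\hat{y}^{\T}\boldsymbol{\hat{A}^+}(x)\hat{y}=\sum_q w_q\,\GK[\lambdasteady^+](x;\xi^{(q)})\,(\Phi\hat{y})_q^2\geq \Big(\min_q \GK[\lambdasteady^+](x;\xi^{(q)})\Big)\sum_q w_q(\Phi\hat{y})_q^2 = \min_q \GK[\lambdasteady^+](x;\xi^{(q)}),
\end{equation*}
since $\sum_q w_q(\Phi\hat{y})_q^2=\hat{y}^{\T}\Phi^{\T}W\Phi\hat{y}=1$. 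Because $\boldsymbol{\hat{\mathcal{D}}^+}$ shares its eigenvalues with $\boldsymbol{\hat{A}^+}$, this delivers $\min_k \DhatPlus_{k}(x)\geq \min_q \GK[\lambdasteady^+](x;\xi^{(q)})$, and the identical computation applied to $-\boldsymbol{\hat{A}^-}$ produces the analogous bound $\min_k |\DhatMinus_{k}(x)|\geq \min_q |\GK[\lambdasteady^-](x;\xi^{(q)})|$.

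The main subtlety I anticipate is the degree count and the identity $\Phi^{\T}W\Phi=\indikator$ for \emph{multi-dimensional} $\xi$ in a way that is consistent with the chosen index set $\K$: for the full-tensor basis $\K_{\T}$ a tensor-product Gauss rule with $\lceil 3(K+1)/2\rceil$ nodes per coordinate suffices transparently, whereas for the sparse basis $\K_{\textup{S}}$ one must verify that every triple product $\phi_{\boldsymbol{k}}\phi_{\boldsymbol{i}}\phi_{\boldsymbol{j}}$ still falls within the exactness class of the cubature. Once this is in hand, combining the two bounds gives $e^{\mub L/(2\lambda_{\min}^{(\text{th})})}\|\mathcal{D}\Ghat\mathcal{D}^{-1}\|_2 \leq e^{\mub L/(2\lambda_{\min}^{(\text{cor})})}\|\GRiemann\|_2\leq 1$, which is exactly~\eqref{ThInequality2}.
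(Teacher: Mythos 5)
Your proposal is correct and follows essentially the same route as the paper: the norm identity $\lVert\Ghat\rVert_2=\lVert\GRiemann\rVert_2$ via orthogonality of $\boldsymbol{\hat{\mathcal{T}}^\pm}$, and the bound on the Galerkin eigenvalues via exactness of the Gaussian quadrature applied to the entries of $\boldsymbol{\hat{A}^\pm}$. The only difference is that where the paper cites \cite[Th.~2]{Sonday2011} for the spectral inclusion and factors the multi-dimensional inner products coordinate-wise, you prove the inclusion directly with the Rayleigh-quotient identity $\boldsymbol{\hat{A}^+}=\Phi^{\T}WD^+\Phi$ and $\Phi^{\T}W\Phi=\indikator$, which is a valid, self-contained substitute.
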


\begin{proof}
Since the orthogonal matrices~$\boldsymbol{\widehat{\mathcal{T}}^\pm}$ preserve the spectral matrix norm, we have~${\big\lVert\Ghat\big\rVert_2=\big\lVert\GRiemann\big\rVert_2}$. 
Due to the relation
\begin{equation}\label{LabelCorollary}
{
\boldsymbol{\widehat{\mathcal{T}}^\pm}(x) \boldsymbol{\widehat{\mathcal{D}}^\pm}(x) \boldsymbol{\widehat{\mathcal{T}}^\pm}(x)^{\T}
=
\boldsymbol{\widehat{A}^\pm}(x)
=
\bigg(
\Big\langle
 \GK\big[\lambdasteady^\pm\big](x;\cdot),
 \phi_{\boldsymbol{i}}(\cdot) \phi_{\boldsymbol{j}}(\cdot)
 \Big\rangle_{\mathbb{P}} \bigg)_{\boldsymbol{i},\boldsymbol{j}\in\K}
}
\end{equation}
the eigenvalues~$\boldsymbol{\widehat{\mathcal{D}}^\pm}$ coincide with those of the matrix containing the projected random eigenvalues. These are in turn within the range of the random spectrum of the random characteristic speeds~\cite[Th.~2]{Sonday2011}. The integrals in equation~\eqref{LabelCorollary} are computed exactly with Gaussian  quadrature~nodes~$\xi^{(q)}$ and weights~$\widehat{w}^{(q)}$ as
\begin{align*}
\Big\langle
\GK\big[\lambdasteady^\pm\big](x;\cdot),
\phi_{\boldsymbol{i}}(\cdot) \phi_{\boldsymbol{j}}(\cdot)
\Big\rangle_{\mathbb{P}} 
&=
\sum\limits_{\boldsymbol{k} \in \K }
\prod_{\ell=1}^M
{\widehat{\lambda}^\pm_{k_\ell}}(x) \quad\quad
\Big\langle
\phi_{k_\ell}(\xi_\ell),
\phi_{i_\ell}(\xi_\ell)
\phi_{j_\ell}(\xi_\ell)
\Big\rangle_{\mathbb{P}} \\
&=
\sum\limits_{\boldsymbol{k} \in \K }
\prod_{\ell=1}^M
{\widehat{\lambda}^\pm_{k_\ell}}(x) 
\Bigg[
\sum\limits_{q=1}^Q
\phi_{k_\ell}(\xi^{(q)})
\phi_{i_\ell}(\xi^{(q)})
\phi_{j_\ell}(\xi^{(q)})
\widehat{w}^{(q)} \Bigg]. 
\end{align*}
Hence, the random characteristic speeds can be estimated by evaluating only at Gaussian quadrature nodes.

\end{proof}

\noindent
Although the statement of Theorem~\ref{TheoremStability} seems to be abstract, it is in fact quite practical. 
All quantities can be computed numerically. 
The control only influences the dissipativity condition~\eqref{ThInequality2}. It is a modification of the well-known deterministic dissipativity condition
\begin{equation}\label{Dissipativity}
\rho_2\big( B \big)
\coloneqq
\inf\limits_{ \mathcal{D}\in\mathbb{D} }
\Big\{
\big\lVert
\mathcal{D} 
B
\mathcal{D}^{-1}
\big\rVert_2
\Big\}
<1,
\end{equation}
where $\mathbb{D}$ denotes the set of strictly positive diagonal matrices~\cite[Sec.~2]{L7}.  
Corollary~\ref{CorDissipativityCond}  allows for a practical verification of this condition.



\section{Nonlinear hyperbolic conservation laws}\label{SectionNonlinear}

The Lyapunov stability analysis in Section~\ref{Sec4} is based on  $L^2$-solutions~\cite[Appendix~A]{O1} to~\emph{linear} hyperbolic balance laws. Extensions to \emph{nonlinear} systems are only partial, since 
Lyapunov’s indirect method~\cite{Khalil}  does
not necessarily hold for hyperbolic systems, i.e.~the exponential stability of a nonlinear hyperbolic system does not follow from the stability of its linearization~\cite[Sec.~4]{O1}.  
Furthermore, 
solutions to conservation laws exist
in the classical sense only in finite time due to the occurence of shocks~\cite{Riemann1859,Dafermos}. Hence, the stochastic Galerkin formulation should be performed for a conservative formulation. There are several attempts to deduce hyperbolic stochastic Galerkin formulations for nonlinear hyperbolic systems. We mention here briefly  three approaches for 
shallow water equations~\cite{dai2020hyperbolicity}, where the gPC modes~$\hat{h}$ and $\hat{q}$ describe random water height and mass flux, as well as for  isothermal Euler equations~\cite{GersterJCP2019,S5} with unknown density~$\hat{\rho}$. The parameters are the gravitational constant $g>0$ and the speed of sound $a>0$. Furthermore, the {entropy closure}~\cite{H1} yields a hyperbolic stochastic Galerkin formulation for general systems that are endowed with a strictly convex entropy. \\

\textbf{Shallow water equations:}\vspace*{2mm}

$\displaystyle
\begin{aligned}
&\partial_t
\begin{pmatrix}
\hat{\rho}(t,x) \\
\hat{q}(t,x) 
\end{pmatrix}
+
\begin{pmatrix}
\hat{q}(t,x) \\
\mathcal{P}\big(\hat{q}(t,x)\big) \mathcal{P}^{-1}\big(\hat{\rho}(t,x)\big)
\hat{\rho}(t,x)
+
\frac{g}{2} \mathcal{P}\big(\hat{\rho}(t,x)\big) \hat{\rho}(t,x)
\end{pmatrix}
=0 \\
&\text{for}\quad
\mathcal{P}(\hat{\rho})
\coloneqq
\sum\limits_{ \boldsymbol{k} \in \K }
\hat{\rho}_{\boldsymbol{k}}
\Big(
\big\langle
\phi_{\boldsymbol{k}}(\cdot),
\phi_{\boldsymbol{i}}(\cdot) \phi_{\boldsymbol{j}}(\cdot)
\big\rangle_{\mathbb{P}}
\Big)_{\boldsymbol{i},\boldsymbol{j} \in \K}
\end{aligned}
$

\vspace{4mm}
	
\textbf{Roe variable transform for isothermal Euler equations:}\vspace*{2mm}

$\displaystyle
\begin{aligned}
&\partial_t
\begin{pmatrix}
	\hat{\rho}(t,x) \\
	\hat{q}(t,x) 
\end{pmatrix}
+
\begin{pmatrix}
\hat{q}(t,x) \\
\mathcal{P}\big(\hat{\beta}(t,x)\big) \hat{\beta}(t,x)
+
a^2 \hat{\rho}(t,x)
\end{pmatrix}
=0\\
&\text{for}\quad \hat{\beta}\coloneqq \mathcal{P}^{-1}(\hat{\alpha}) \hat{q} \quad \text{and} \quad
\hat{\alpha}\coloneqq 
\argmin_{\tilde{\alpha}}
\bigg\{ \frac{\tilde{\alpha} \mathcal{P}(\tilde{\alpha}) \tilde{\alpha} }{3} - \tilde{\alpha}^\T \hat{\rho} \bigg\}
\end{aligned}
$

\vspace{4mm}

\textbf{Entropy Closure:}\vspace*{2mm}

$\displaystyle
\begin{aligned}
&\partial_t 
\Big\langle
y(t,x;\cdot), \phi_{\boldsymbol{k}}(\cdot)
\Big\rangle_{\mathbb{P}} 
+
\partial_x
\Big\langle
f\big( y(t,x;\cdot) \big),\phi_{\boldsymbol{k}}(\cdot)
\Big\rangle
=0 \\
&\text{for}\quad \boldsymbol{k} \in \K \quad \text{and} \quad
y(t,x;\xi) \coloneqq \argmin_{\tilde{y}} 
\bigg\{
\int \eta (\tilde{y}) \d \mathbb{P}
\bigg\}
\quad\text{such that}\quad
\Big\langle
\tilde{y}, \phi_{\boldsymbol{k}}(\cdot)
\Big\rangle_{\mathbb{P}} 
=
\hat{y}_{\boldsymbol{k}}(t,x)
\end{aligned}
$ 

\vspace{4mm}

\noindent
All of these approaches yield for general gPC bases a hyperbolic system in conservative form
\begin{equation}\label{nonlinear}
\partial_t 
\hat{y}(t,x)
+
\partial_x
\hat{f}\big(
\hat{y}(t,x)
\big)
=0
\quad
\text{with diagonalizable Jacobian}
\quad
\D_{\hat{y}}
\hat{f}(\hat{y}) 
=
\boldsymbol{\widehat{\mathcal{T}}}(\hat{y}) 
\boldsymbol{\widehat{\mathcal{D}}}(\hat{y}) \boldsymbol{\widehat{\mathcal{T}}}^{-1}(\hat{y}). 
\end{equation}
Without loss of generality, ordered  
eigenvalues~$
\widehat{\Lambda}(\hat{y})
=
\diag\big\{ \widehat{\Lambda}^+(\hat{y}),\widehat{\Lambda}^-(\hat{y}) \big\}
$ 
with 
$
\widehat{\Lambda}^-(\hat{y})
<0<
\widehat{\Lambda}^+(\hat{y}) 
$ are assumed. We introduce the notation 
$
\hat{y}
=
\big(
\hat{y}^+, 
\hat{y}^-
\big)^\T
$ 
with cardinality 
$
\big| \widehat{\Lambda}^\pm(\hat{y}) \big|
=
\big| \hat{y}^\pm \big|
$. Then, the conservation law~\eqref{nonlinear} can be endowed with the  possibly nonlinear boundary conditions
\begin{equation}\label{nonlinearBC}
\begin{pmatrix}
\hat{y}^+(t,0) \\
\hat{y}^-(t,L) 
\end{pmatrix}
=
\Gphysicsnonlinear
\begin{pmatrix}
\hat{y}^+(t,L) \\
\hat{y}^-(t,0) 
\end{pmatrix}
 \quad\text{for}\quad
 \Gphysicsnonlinear \in C^2\Big( \mathbb{R}^{2|\K|}, \mathbb{R}^{2|\K|} \Big)
\end{equation}
such that the boundary value  problem~\eqref{nonlinear}~and~\eqref{nonlinearBC} is well-posed~\cite[Appendix~B]{O1}. 
According to \cite[Sec.~1~and~Sec.~4]{O1}, 
a direct extension of the $L^2$-stability analysis in Section~\ref{Sec4} would require a transform in Riemann coordinates, i.e.~a diffeomorphism~$
\varphi(\hat{y})=\Rd
$ 
satisfying
\begin{equation}\label{ChangeCoordinates}
\D_{\hat{y}} \varphi(\hat{y})
\D_{\hat{y}} \hat{f}(\hat{y})
=
\boldsymbol{\widehat{\mathcal{D}}}(\hat{y})
\D_{\hat{y}} \varphi(\hat{y}).
\end{equation}
Then, the system~\eqref{nonlinear} is for $C^1$-solutions with norm 
$$
\Big\lVert \hat{y}(t,\cdot) \Big\rVert_{C^1}
\coloneqq
\max\limits_{x\in[0,L]}
\Big\{
\big\lVert
\hat{y}(t,x)
\big\rVert_0
+
\big\lVert
\partial_x
\hat{y}(t,x)
\big\rVert_0
\Big\} <\infty
\quad\text{and}\quad
\big\lVert
\hat{y}(t,x)
\big\rVert_0
\coloneqq
\max\limits_{\boldsymbol{k}\in\K} 
\Big\{\big|\hat{y}_{\boldsymbol{k}}(t,x)
	\big|\Big\}
$$
equivalent to the diagonal form 
$
\partial_t \Rd 
+
\boldsymbol{\widehat{\mathcal{D}}} \big( \varphi^{-1}(\Rd) \big) \partial_x \Rd=0. 
$ 
The transform~\eqref{RiemannInvariants} for linear systems is a special case, i.e.~$\varphi(\hat{y})=T^{-1}\hat{y}$. 
		For general nonlinear systems, however, finding the change of coordinates~$\varphi$ requires to solve the first-order differential equation~\eqref{ChangeCoordinates}. 
For systems of size~$|\hat{y}|\geq 3$  the solution~$\varphi$ exists~\emph{only in specific cases}~\cite{Lax1973,Tsien1997,O1}. 
This issue can be partially circumvented for $C^1$-solutions to conservation laws of the  form~\eqref{nonlinear} with constant steady states, where at least the Jacobian is diagonalizable. The following theorem states how boundary conditions can  be specified such that $C^1$-solutions are stable.\\
 
\begin{theorem}[{According to~\cite[Def.~4.17 and~Th.~4.18]{O1}}] \label{TheoremNonLinear}
Assume boundary conditions~\eqref{nonlinearBC} of the form
$$
\rho_\infty\big(\Gphysicsnonlinear'\big)
\coloneqq
\inf\limits_{ \mathcal{D}\in\mathbb{D} }
\Big\{
\big\lVert
\mathcal{D} 
\Gphysicsnonlinear'
\mathcal{D}^{-1}
\big\rVert_\infty
\Big\}
<1
\quad\text{with Jacobian}\quad
 {\Gphysicsnonlinear'}
 \coloneqq
 \D_{\hat{y}} \Gphysicsnonlinear(\hat{y})\big|_{\hat{y}=\hat{y}^*}.
$$
Then, there are  constants~$\mu,\varepsilon>0$ such that
\begin{align*}
&\big\lVert \hat{y}(t,\cdot)
-
\hat{y}^*(\cdot)
\big\rVert_{C^1}
\lesssim
e^{-\mu t} 
\big\lVert \hat{y}(0,\cdot)
-
\hat{y}^*(\cdot)
\big\rVert_{C^1} \\
&\text{for all initial values}\quad
\hat{y}(0,\cdot) \in \bigg\{
\hat{y}_0 \in C^1\Big( [0,L]; \mathbb{R}^{2|\K|}\Big) \ \Big| \ 
\big\lVert \hat{y}_0
-
\hat{y}^*
\big\rVert_{C^1} < \varepsilon
\bigg\}.
\end{align*}


\end{theorem}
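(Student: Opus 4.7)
The plan is to reduce the statement to a near-diagonal quasilinear system by linearizing the characteristic structure at the constant steady state~$\hat{y}^*$, and then to run a sup-norm Lyapunov argument tailored to the $C^1$-topology. Set $\Delta\hat{y}(t,x)\coloneqq \hat{y}(t,x)-\hat{y}^*$ and let $T^*\coloneqq \boldsymbol{\hat{\mathcal{T}}}(\hat{y}^*)$, $\Lambda^*\coloneqq\boldsymbol{\hat{\mathcal{D}}}(\hat{y}^*)$. Since $\hat{y}^*$ is constant in $x$, the change of variables $\Rd\coloneqq(T^*)^{-1}\Delta\hat{y}$ turns~\eqref{nonlinear} into
\begin{equation*}
\partial_t \Rd + \Lambda(\Rd)\,\partial_x \Rd=0,\qquad \Lambda(\Rd)\coloneqq (T^*)^{-1}\D_{\hat{y}}\hat{f}\big(\hat{y}^*+T^*\Rd\big)T^*,
\end{equation*}
with $\Lambda(0)=\Lambda^*$ diagonal and $\Lambda^-<0<\Lambda^+$. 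The non-resonance issue highlighted after~\eqref{ChangeCoordinates} is sidestepped here because we only need a constant, linear diffeomorphism (not a Riemann-coordinate change): for $\|\Rd\|_{C^1}$ small the system is a genuine perturbation of the diagonal hyperbolic system, and the boundary conditions~\eqref{nonlinearBC} linearize to $(\Rd^+(t,0),\Rd^-(t,L))^\T = K\,(\Rd^+(t,L),\Rd^-(t,0))^\T+O(\|\Rd\|^2)$ with $K$ similar to $\Gphysicsnonlinear'$ via $T^*$, so $\rho_\infty(K)=\rho_\infty(\Gphysicsnonlinear')<1$.

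Next, I would introduce the weighted $C^0$ and $C^1$ functionals
\begin{equation*}
V_0(t)\coloneqq \max_i\sup_{x\in[0,L]} \eta_i(x)\,\big|\Rd_i(t,x)\big|,\qquad V_1(t)\coloneqq \max_i\sup_{x\in[0,L]} \eta_i(x)\,\big|\partial_x\Rd_i(t,x)\big|,
\end{equation*}
with weights $\eta_i(x)=\exp\!\big(\mp\mub\,x/\Lambda^*_i\big)$ chosen, in the spirit of~\eqref{Weights} but exponentially decreasing~along the correct characteristic direction, so that propagation along characteristics multiplies each component by $e^{-\mub L/|\Lambda^*_i|}$ between successive boundary hits. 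The main step is then to track $(V_0+V_1)(t)$ along the characteristic semigroup: interior propagation contracts by a factor $e^{-\mub\,\tau}$ over a transit time~$\tau$, while a reflection at $x=0$ or $x=L$ multiplies the relevant components by a matrix whose sup-norm, after the diagonal rescaling encoded in $\mathcal{D}\in\mathbb{D}$ realizing the infimum in $\rho_\infty(\Gphysicsnonlinear')<1$, is strictly less than one.

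The differentiated system $w\coloneqq \partial_x\Rd$ satisfies $\partial_t w+\Lambda(\Rd)\partial_x w = -\Lambda'(\Rd)\,w\,\partial_x\Rd$, and at the boundary $w$ satisfies a relation of the same linear form plus a quadratic correction from the second derivative of $\Gphysicsnonlinear$. Because the forcing on $w$ is quadratic in $w$, it can be absorbed into the strict contraction of $V_0+V_1$ as long as $V_0+V_1$ is smaller than some absolute constant determined by $\|\Lambda'\|$, $\|\Gphysicsnonlinear''\|$ and $1-\rho_\infty(\Gphysicsnonlinear')$; this fixes the smallness radius $\varepsilon$. Iterating over successive characteristic transit times and combining $C^0$ and $C^1$ bounds in the standard way then yields $V_0(t)+V_1(t)\le C e^{-\mu t}(V_0(0)+V_1(0))$ with an explicit $\mu>0$, from which the claimed $C^1$-bound follows by the equivalence of $\|\cdot\|_{C^1}$ and $V_0+V_1$ for nonzero weights.

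The hard part is not the Lyapunov construction itself but closing the nonlinear estimate on $\partial_x\Rd$: one has to show that the quadratic source term $\Lambda'(\Rd)\,w\,\partial_x\Rd$ and the $\Gphysicsnonlinear''$-correction at the boundary are strictly dominated by the contraction margin $1-\rho_\infty(\Gphysicsnonlinear')$ uniformly for all $t\ge 0$. This is exactly a bootstrap argument: assume $V_0+V_1$ stays below $\varepsilon$ to run the linearized estimate, obtain exponential decay of $V_0+V_1$, and conclude that the a priori bound is preserved for all time provided the initial data lies in a small enough $C^1$-ball, which is the origin of the constant $\varepsilon$ in the statement.
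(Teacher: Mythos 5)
The paper does not actually prove this theorem: it is quoted verbatim from \cite[Def.~4.17, Th.~4.18]{O1}, whose proof rests on the classical method of characteristics for quasilinear systems (Greenberg--Li, Qin, Zhao). Your sketch attempts a genuine proof, so I compare it against that classical argument --- and there is a gap at the decisive step, namely closing the estimate on $\partial_x\Rd$.

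The problem is your choice of the \emph{frozen} frame $T^*=\boldsymbol{\hat{\mathcal{T}}}(\hat y^*)$. After the constant linear change of variables, $\Lambda(\Rd)$ is diagonal only at $\Rd=0$; its off-diagonal entries are $O(\lVert\Rd\rVert)$. For the zeroth-order functional $V_0$ this is harmless (the coupling $\sum_{j\neq i}\Lambda_{ij}(\Rd)\,\partial_x\Rd_j$ is quadratic in the $C^1$ norm). But differentiate the system: $w\coloneqq\partial_x\Rd$ satisfies
\begin{equation*}
\partial_t w_i+\Lambda_{ii}(\Rd)\,\partial_x w_i
=-\sum_{j\neq i}\Lambda_{ij}(\Rd)\,\partial_x w_j+O\big(|w|^2\big),
\end{equation*}
and the terms $\partial_x w_j=\partial_x^2\Rd_j$ are \emph{second} derivatives, which $V_0+V_1$ does not control. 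Integrating along the $i$-th characteristic therefore does not close in $C^1$; your claim that ``the forcing on $w$ is quadratic in $w$'' is only true for the exactly diagonal part. This is precisely why the classical proof does not use a constant diagonalization but the \emph{state-dependent} left eigenvectors $l_i(\hat y)$ of $\textup{D}_{\hat y}\hat f(\hat y)$ (smooth near $\hat y^*$ because the eigenvalues are separated), setting $w_i\coloneqq l_i(\hat y)\,\partial_x\hat y$: then $w_i$ obeys a true ODE along the $i$-th characteristic with a purely algebraic quadratic source $\sum_{j,k}\gamma_{ijk}(\hat y)\,w_jw_k$ and no derivatives of $w$, after which your weighted sup-norm bookkeeping and the bootstrap go through. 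Note that this device does \emph{not} require solving \eqref{ChangeCoordinates} --- one never integrates the eigenvector field into global Riemann coordinates --- so the obstruction for $|\hat y|\geq 3$ is genuinely avoided, but by a pointwise frame rather than by freezing it at $\hat y^*$. A secondary inaccuracy: $\rho_\infty$ is invariant under conjugation by \emph{diagonal} matrices only, so your assertion $\rho_\infty(K)=\rho_\infty(\Gphysicsnonlinear')$ for $K$ similar to $\Gphysicsnonlinear'$ via $T^*$ does not hold in general; the hypothesis of the theorem must be read, as in \cite{O1}, for the boundary Jacobian expressed in the coordinates in which the advection matrix is diagonal at the steady state.
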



\noindent
Theorem~\ref{TheoremNonLinear} also allows to couple systems on different arcs, as illustrated in Figure~\ref{FigNetwork3}.

\vspace{1cm}
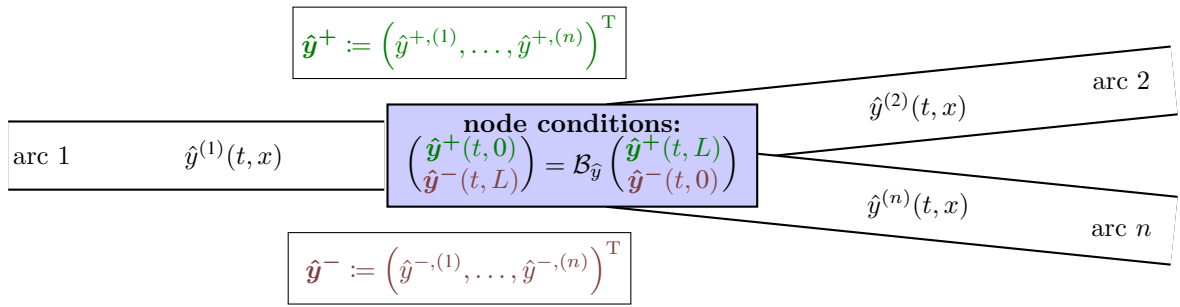
\begin{figure}[h]
	\tikzstyle{block} = [thick,draw, fill=blue!20, rectangle, 
minimum height=3em, minimum width=3em]
\tikzstyle{Kreis} = [thick,draw, fill=blue!20, circle, 
minimum height=3em, minimum width=3em]
\tikzstyle{sum} = [draw, fill=darkgreen!20, rectangle, 
minimum height=2em, minimum width=3em]
\tikzstyle{input} = [coordinate]
\tikzstyle{output} = [coordinate]
\tikzstyle{pinstyle} = [ pin edge={to-,thick,black}]
\tikzstyle{vecArrow} = [thick,double distance=25pt]

\vspace{-1cm}\hspace{-5mm}
\begin{tikzpicture}[scale=0.1]

\draw [vecArrow] (0,0) -- (50,0);
\draw [vecArrow] (155,10) -- (80,2.3);
\draw [vecArrow] (155,-10) -- (80,-2.3);

\node [block,align=center] at (75,0) {\textbf{node conditions:}\\
\	$
\begin{pmatrix}{\color{gruen} \boldsymbol{\hat{y}^+} (t,0) }\\ {\color{braun}\boldsymbol{\hat{y}^-}(t,L)} \end{pmatrix} 
=
\Gphysicsnonlinear
\begin{pmatrix}\color{gruen}  \boldsymbol{\hat{y}^+}(t,L) \\ \color{braun}\boldsymbol{\hat{y}^-}(t,0) \end{pmatrix} 
$ \
  };

\node at (30,0) {$\hat{y}^{(1)}(t,x) $};
\node at (121,6.5) {$\hat{y}^{(2)}(t,x) $};
\node at (121,-6.5) {$\hat{y}^{(n)}(t,x) $};

\node at (4.5,0) {arc 1};
\node at (148,10) {arc 2};
\node at (148,-10) {arc $n$};

\node at (60,15) {
$ \boxed{\color{gruen} \boldsymbol{\hat{y}^+}\coloneqq \Big( \hat{y}^{+,(1)},\ldots,\hat{y}^{+,(n)} \Big)^\T}$
};

\node at (60,-15) {
$ \boxed{\color{braun}\ \boldsymbol{\hat{y}^-}\coloneqq \Big( \hat{y}^{-,(1)},\ldots,\hat{y}^{-,(n)} \Big)^\T}$
};







\end{tikzpicture}
	\caption{Network with $n$ arcs for  dynamics described by the boundary value  problem~\eqref{nonlinear}~and~\eqref{nonlinearBC}.}
	\label{FigNetwork3}
\end{figure}

\noindent 
However in comparison to the stability analysis for linear systems with dissipativity condition~\eqref{Dissipativity}, Theorem~\ref{TheoremNonLinear} does not account for source terms and  the $C^1$-solution concept requires more restrictive assumptions on the boundary conditions~\cite[Prop.~4.7]{O1}, since the inequality~$\rho_2\big(\Gphysicsnonlinear'\big) \leq \rho_\infty\big(\Gphysicsnonlinear'\big)$ holds. 
Furthermore, Theorem~\ref{TheoremNonLinear} 
 requires initial data to be close to a steady state. This makes  well-balanced numerical schemes essential. 
We mention here briefly that well-balanced schemes have been developed for stochastic Galerkin formulations to scalar equations~\cite{H5} and to shallow water equations~\cite{dai2020hyperbolicity}. Since they must be specifically adopted to a particular model, we  limit ourselves in the following to linear systems.

\newpage
\section{Elastic and viscoplastic deformations}\label{SectionFeedback}

Elastic deformations are typically small displacements about a reference state~\cite[Sec.~2.12.1]{Leveque}.  The displacements lead to \textbf{strains},  i.e.~small deformations within the material. 
The interior forces because of stretching and compression of atomic bonds are called~\textbf{stress}. These forces result in acceleration of the material and in turn affect the evolution of strains. 
Hence, the constitutive equations consist of Newton's law, relating stress and force to acceleration, and a~\textbf{stress-strain relationship}.
Viscoplastic deformations can be modelled by decomposing the  total strain
$${\epsilon(t,x)=\epsilon^e(t,x)+\epsilon^p(t,x)}$$
into an elastic part~$\epsilon^e$ and plastic part~$\epsilon^p$. 
There are several one-dimensional models  that describe  a  displacement $u(t,x)$ and   stress~$\sigma(t,x)$ at time~${t\geq 0}$ and position~${x\in[0,L]}$.
An overview on various models can be found e.g.~in~\cite{Inelasticity}. 
Here, the elastic relationship is described by
$${
\sigma(t,x)
=
E\big(
\epsilon(t,x) - \epsilon^p(t,x)
\big)
}$$
with a constant~${E>0}$. 
A given stress-strain relationship allows to view the plastic part as a function of the stress, i.e.~$\bar{\epsilon}^p(\sigma)$. 
Figure~\ref{measurements} shows two models and typical measurements  in the left panel.  A Bergstr\"om-model 
states a stress-strain relationship in terms of the ordinary differential equation
$$
\frac{\partial\rho(\epsilon)}{\partial \epsilon} 
=
\frac{\dot{\epsilon} U_0}{T}
\sqrt{\rho(\epsilon)}
-\Bigg( \Omega_0+C\text{exp}  \Big(
-\frac{mQ}{RT}
\dot{\epsilon}^{-m} 
\Big) 
\Bigg)
\rho(\epsilon),
\ \
\sigma(\epsilon)
=
\sigma_0+\alpha G b \sqrt{ \rho(\epsilon) }.
$$
We refer the reader to~\cite{BERGSTROM}, where a description of typical model parameters is given. This model is extended by the DRX-model that takes a softening due to dynamically recrystallized grains into account. 
If a critical stress~${\epsilon_c>0}$ is reached, the Bergstr\"om-model is replaced by
\begin{align*}
X(\varepsilon)
&\coloneqq
1-
\textup{exp}\left(
-\kappa
\Big(
\frac{\epsilon-\epsilon_c}{\epsilon_s-\epsilon_c}
\Big)^{q}
\right),\quad
X_\sigma(\epsilon) 
\coloneqq 
\int_{\epsilon_c}^{\epsilon}
X'(\epsilon) \sigma(\epsilon-\epsilon_c) \d\epsilon, \\
\sigma(\epsilon) &\coloneqq \sigma(\epsilon_c) \big( 1-X(\epsilon) \big)+X_\sigma(\epsilon).
\end{align*}
Here, $X(\epsilon)$ is the recrystallized volume fraction and $\kappa,q>0$ are further parameters. We refer the reader to~\cite{Bambach2013} for a more detailed description.

\begin{figure}[h]
\begin{center}
\begin{minipage}{0.6\linewidth}
	\scalebox{1}{\includegraphics[width=\linewidth]{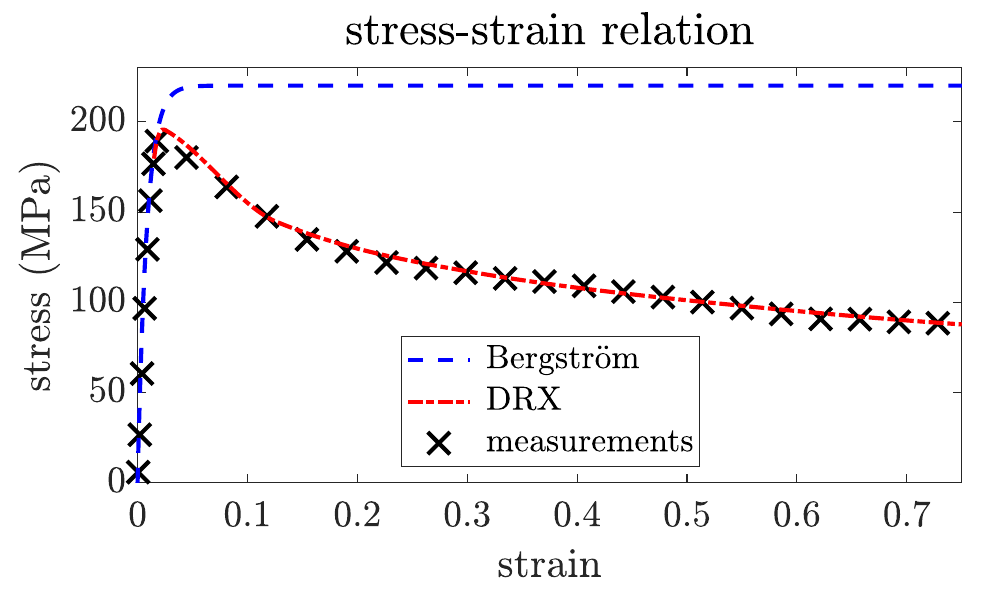}}
\end{minipage}
\qquad\qquad
\begin{minipage}{0.18\linewidth}
	\scalebox{0.9}{\includegraphics[width=\linewidth]{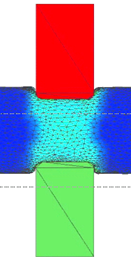}}
\end{minipage}
	\caption{Left: Bergstr\"om- and DRX-model compared to sample measurements.  
		Right:  
Sketch of a forming process.
}
	\label{measurements}
\end{center}
\end{figure}

\noindent
Then, the displacement velocity, i.e.~the acceleration~${v(t,x)\coloneqq \partial_t u(t,x)}$, and the stress are described by the balance law
$$
\frac{\partial}{\partial t} 
\begin{pmatrix} v(t,x) \\ \sigma(t,x) \end{pmatrix}
+
\begin{pmatrix} & -1 \\ -E \\ \end{pmatrix}
\frac{\partial}{\partial x} 
\begin{pmatrix} v(t,x) \\ \sigma(t,x) \end{pmatrix}
=
\begin{pmatrix} 0 \\ \bar{\epsilon}^p \big(\sigma(t,x) \big) \end{pmatrix}.
$$
This model is used to describe a one-dimensional cross-section of a forming process as illustrated in the right panel of~Figure~\ref{measurements}.

\subsection{Derivation of a control rule}\label{SubSecControlRule}

Microstructural features such as grain size 
	can be estimated from the stress. This is necessary to impose plastic deformations. 
	The stress can be used as indicator for the microstructural states which cannot be measured during a forming process. 
	Hence, there is an interest to control  the system to a desired stress~$\sigma^*$. 
	We assume a source term that is linear close to the desired state, i.e.~$
\bar{\epsilon}^p (\sigma )
=
\bar{\epsilon}^p (\sigma^* )
+
\DerivStress 
\Delta \sigma
$
with a constant~$\DerivStress\coloneqq \partial_\sigma \bar{\epsilon}^p(\sigma^*) \in\mathbb{R}$ that is obtained by a Taylor series approximation. Then, 
the deviations of the desired state are described by the linear system
\begin{align*}
&\partial_t \Delta y(t,x) + A \partial_x \Delta y(t,x) = -S(x) \Delta y(t,x) \\
&\text{with} \quad
A = \begin{pmatrix} & -1 \\ -E \\ \end{pmatrix}
\quad\text{and}\quad
S = -\begin{pmatrix} 0 & 0 \\ 0 & \DerivStress	\end{pmatrix}.
\end{align*}
In this particular example the advection part is uniform, but the source term is space-varying. 
We diagonalize  the Jacobian
$$
A = \Tsteady \Lambdasteady \Tsteady^{-1}
\quad \text{with} \quad 
\Tsteady = \begin{pmatrix}
-1 & 1 \\ \sqrt{E} & \sqrt{E}
\end{pmatrix}
\quad \text{and} \quad 
\Lambdasteady = \begin{pmatrix}
\sqrt{E} \\ & -\sqrt{E}
\end{pmatrix}
$$
to obtain the Riemann invariants and the corresponding source term as
\begin{equation*}
\Riemann 
=
\begin{pmatrix} \Riemann^+ \\ \Riemann^- \end{pmatrix}
\coloneqq \Tsteady^{-1} \Delta y
=
\frac{1}{2}
\begin{pmatrix}
\frac{\Delta \sigma}{\sqrt{E}} - \Delta v \\
\frac{\Delta \sigma}{\sqrt{E}} + \Delta v
\end{pmatrix}, \quad
\SRiemann = \Tsteady^{-1}S\Tsteady = - \frac{\DerivStress}{2}
\begin{pmatrix}
1 & 1 \\ 1 & 1
\end{pmatrix}.
\end{equation*}
For the system~${
	\Riemann_t + \Lambdasteady \Riemann_x = -\SRiemann \Riemann
}$ we prescribe the linear feedback boundary conditions
\begin{equation*} 
\begin{pmatrix} \Riemann^+(t,0) \\ \Riemann^-(t,L) \end{pmatrix}
=
\begin{pmatrix} & \kappa_0 \\ \kappa_1  \end{pmatrix}
\begin{pmatrix} \Riemann^+(t,L) \\ \Riemann^-(t,0) \end{pmatrix}.
\end{equation*}


\noindent
These boundary conditions, reformulated in terms of the displacement velocity, read as 
\begin{equation} \label{feedbacklaw}
\begin{aligned}
v(t,0)
&=
v^*(0)
+
\frac{1 - \kappa_0 }{ \sqrt{E}+ \kappa_0 \sqrt{E} }
\Big(
\sigma(t,0)-\sigma^*(0)
\Big), \\
v(t,L)
&=
v^*(L)
+
\frac{\kappa_1-1 }{ \sqrt{E}+\sqrt{E}\kappa_1 }
\Big(
\sigma(t,L)-\sigma^*(L)
\Big),
\end{aligned}
\qquad \text{i.e.~the  control }
\Gphysicslinear=
\begin{pmatrix}
\frac{1 - \kappa_0 }{ \sqrt{E}+ \kappa_0 \sqrt{E} } &  \\
& \frac{\kappa_1-1 }{ \sqrt{E}+\sqrt{E}\kappa_1 } 
\end{pmatrix}
\end{equation}
for the linear boundary conditions~\eqref{BC} is obtained. 
Parameters~${\kappa_0,\kappa_1 \in \mathbb{R}}$ should be specified such that the dissipativity condition~\eqref{ThInequality2} in Theorem~\ref{TheoremStability} holds. Furthermore, a large constant~$\mub>0$ is desirable, which leads to a larger decay rate~$\mu$. For instance, the ansatz 
$
\mu(\mub)
\geq 0
$, 
$
\textup{exp}\big(\mub \frac{L}{2 \lambda_{\min} }\big) \big\lVert\GRiemann \big\lVert= 1 
$ 
yields
\begin{align*}
\mu(\mub)
&=
\mub
-
\bigg|
\frac{\DerivStress}{2}
\bigg|
\max\limits_{x\in[0,L]}
\bigg\{
	3 e^{-\frac{\mub x}{\sqrt{E}}}
	+
	e^{-\frac{\mub (L-x)}{\sqrt{E}}}, \,
	e^{-\frac{\mub x}{\sqrt{E}}}
+
3e^{-\frac{\mub (L-x)}{\sqrt{E}}}
\bigg\} \\
&\geq
\mub - 2\big| \DerivStress \big|=0
\quad\text{for}\quad
\mub \coloneqq 2\big| \DerivStress \big|.
\end{align*}
With this choice, the dissipativity condition is satisfied for 
${\kappa_i = \kappa \coloneqq
e^{
\frac{L}{\sqrt{E}}
| \DerivStress |	
}}$ and the feedback law~\eqref{feedbacklaw} reads as
$$
v(t,0)
=
v^*(0)
+
\frac{1}{\sqrt{E}}
\coth\bigg(
\frac{L}{\sqrt{E}}
\big| \DerivStress \big|
\bigg)
\Big(
\sigma(t,0)-\sigma^*(0)
\Big).
$$


\subsection{Gaussian random fields}\label{Sec5}

Uncertainties due to unknown model parameters and uncertain steady states can be described by Gaussian random fields. These are completely characterized by their mean and covariance kernel. To be precise, for any given~${n\in\mathbb{N}}$, all locations~${\vec{x}\coloneqq(x_{1},\ldots,x_{n})^\T \in\mathbb{R}^n}$ and all realizations   ${\vec{z}\coloneqq(z_{1},\ldots,z_{n})^\T \in\mathbb{R}^n}$ the probability distribution of the event 
$ \big\lbrace 
\omega\in\Omega \ \big| \ 
\process(x_1;\omega)=z_{1}, \ldots, \process(x_n;\omega)=z_{n} \big\rbrace$ is given by the multivariate Gaussian probability density
\begin{align}
&f_{(\mu,\Sigma)}( \vec{z} ) = 
\frac{1}{\sqrt{
		(2\pi)^n
		 \textup{det}\big(\Sigma(\vec{x})\big) 
}}
\exp\Big(- \frac{1}{2} \big(\vec{z}-\mu(\vec{x}) \big)^\T \Sigma(\vec{x})^{-1} \big(\vec{z}-\mu(\vec{x})\big) \Big) \nonumber \\
& \text{with} \quad
\mu(\vec{x}) \coloneqq
\Big(
\mathbb{E}\big[
\process(x_i;\omega)
\big]\Big)_{i=1,\ldots,n}
\quad\text{and}\quad
\Sigma(\vec{x}) \coloneqq \Big( \C (x_i,x_j) \Big)_{i,j=1,\ldots,n}.\label{MeanVarianceGaussian}
\end{align}

\noindent
Furthermore, measurements can be taken into account. 
Intuitively, one may think of simulating from a Gaussian distribution and then rejecting all sample paths that disagree with the measurements. Note, however, that a Gaussian process restricted to measurements is again a Gaussian process. 
We assume measurements~${\vec{z}^*=(z^*_1,\ldots,z^*_m)^\T\in\mathbb{R}^m}$ at the locations~${\vec{x}^*=(x^*_1,\ldots,x^*_m)^\T}$. Then, the conditioned process satisfies	
$$
\process^*(x_\ell^*;\omega)
\coloneqq
\process(x_\ell^*;\omega)\big|_{\process(x^*)=z^* }
=
z^*_\ell
\quad
\text{for all}
\quad
\ell=1,\ldots,m. 
$$
According to~\cite[A.2]{Rasmussen}, the mean and the covariance matrix of the conditioned process~$\process^*$ read as 
\begin{align*}
\mu^*(\vec{x})
\coloneqq&
\Big(
\mathbb{E}\big[
\process^*(x_i;\omega)
\big]
\Big)_{i=1,\ldots,n} \\
=&
\mu(\vec{x})+
\big(\C(x_i,x^*_\ell) \big)_{\stackrel{i=1,\ldots,n,}{\ell=1,\ldots,m}}
\big(\C(x_k^*,x_\ell^*)\big)_{k,\ell=1,\ldots,m}^{-1}
z^*, \\
\Sigma^*(\vec{x})
\coloneqq&
\Big(
\textup{Cov}
\big[
\process^*(x_i;\omega),
\process^*(x_j;\omega)
\big]\Big)_{i,j=1,\ldots,n} \\
=&\big(\C(x_i,x_j)\big)_{i,j=1,\ldots,n} 
-
\big(\C(x_i,x^*_k) \big)_{\stackrel{i=1,\ldots,n,}{k=1,\ldots,m}}
\big(\C(x_k^*,x_\ell^*)\big)_{k,\ell=1,\ldots,m}^{-1}
\big(\C(x_k^*,x_j) \big)_{\stackrel{k=1,\ldots,m,}{j=1,\ldots,n}}
\end{align*}
where~$\mu$ and~$\C$ are the mean and covariance kernel of the unconditioned process~\eqref{MeanVarianceGaussian}. 
A widely used class of covariance kernels that generate  strictly positive definite matrices~$\Sigma(\vec{x})$ is the Mat\'ern covariance function 
$$  \C_\nu (x_i,x_j) \coloneqq \sigma^2
\frac{2^{1-\nu}}{
	\Gamma(\nu)
}
\bigg( \sqrt{2 \nu} \frac{|x_i - x_j|}{\lambdaMatern} \bigg)^\nu K_v \bigg(  \sqrt{ 2  \nu} \frac{|x_i - x_j|}{\lambdaMatern}  \bigg).
$$
Here, $\Gamma$ is the gamma function,  $K_\nu$ is the modified Bessel function of the second kind, $\sigma^2>0$ denotes the variance and~${\lambdaMatern>0}$ is a scaling parameter, which describes the spatial correlation of the stochastic process. 
The sample paths are $ \nu - 1$ times differentiable~\cite{Rasmussen}. 
Specifically in  Figure~\ref{Kriging}, we consider the 
\begin{align*}
&\text{exponential kernel}
&\C_{\nicefrac{1}{2}}(x,y)
&=
\sigma^2\exp\bigg(
-\frac{|x-y|}{\lambdaMatern}
\bigg),\\
&\text{Gaussian kernel}
&\lim\limits_{\nu\rightarrow\infty}\C_{\nu}(x,y)
&=
\sigma^2\exp\bigg(
-\frac{1}{2}
\Big(
\frac{x-y}{\lambdaMatern}
\Big)^2
\bigg).
\end{align*}
The exponential covariance kernel describes a stationary Ornstein-Uhlenbeck process, where sample paths are not differentiable.  
The Gaussian kernel yields smooth sample paths. 
The upper panels of Figure~\ref{Kriging} show four  simulations of stress, where an exponential (left) and Gaussian (right) covariance structure is assumed as a priori distribution. The mean is plotted as dashed, black line. The $0.95$ confidence region~$\textup{CI}(x)$, satisfying 
$
\mathbb{P}\big[\process(x;\omega) 
\in 
\textup{CI}(x)
\big]
=0.95,
$ 
is shown in grey. The lower panels include measurements at the boundaries~$x_1^*=0$ and~$x_2^*=L$. 
As explained, the posteriori distribution that accounts for measurements is again a Gaussian distribution defined by~$\mu^*$ and~$\Sigma^*$. We observe that uncertainties are substantially decreased close to the measurements, but still mostly described by the a priori distribution in the interior of the spatial domain. \\

\begin{figure}[h]
	\scalebox{1}{\includegraphics[width=\linewidth]{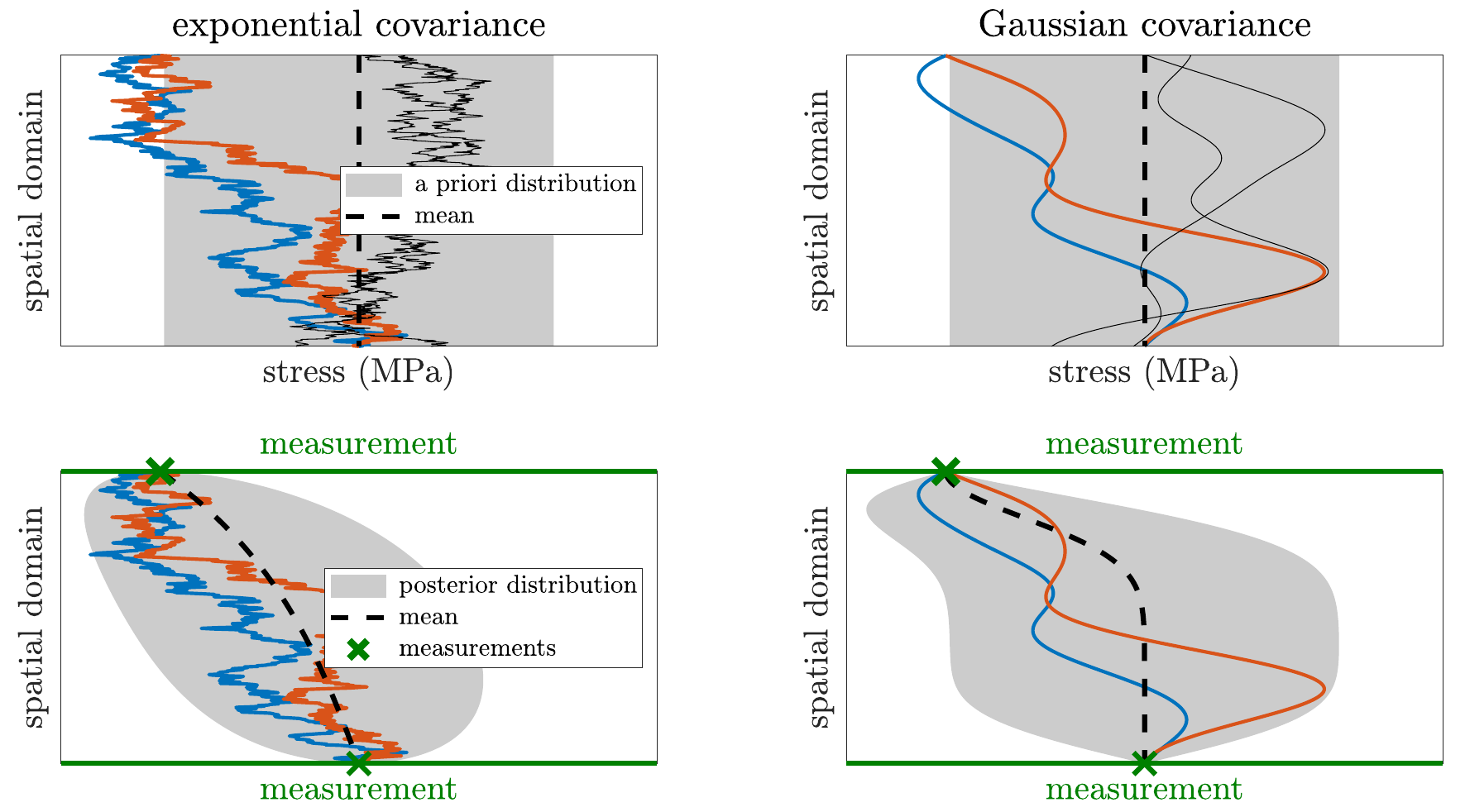}}
	\caption{Upper panels show the a priori Gaussian distribution of an uncertain stress.   Lower panels account for measurements~(green) and show the posteriori distribution.}
	\label{Kriging}
\end{figure}

\noindent
The eigenfunctions and eigenvalues of the strictly positive definite kernel~$\C$ are given by the \textbf{Fredholm integral equation}
\begin{equation}\label{Fredholm}\tag{FI}
\int_0^L \C(x,\cdot) \psi_k(x)  \d x
=
\lambdaKL_k \psi_k(\cdot)
\quad\text{with}\quad
k\in\mathbb{N}.
\end{equation}
The Fredholm integral equation~\eqref{Fredholm} has been analyzed intensively. In particular,  explicit solutions are known for the exponential and Gaussian kernels~\cite{Rasmussen}. Furthermore, there are software packages available, for instance the~\texttt{chebfun}-package~\cite{chebfun}, 
that allow for numerical solutions to general kernels.  In any case, the truncation~${\NKL\in\mathbb{N}}$ must be specified. Typical indicators are the explained ratio between the
\begin{equation}\label{explainedVariance}
\text{total variance} \quad
\frac{\sum\limits_{k=1}^{\NKL} \lambdaKL_k }{\int_0^L \C(x,x) \d x}
\quad\text{and pointwise variance} \quad
\frac{\sum\limits_{k=1}^{\NKL} \lambdaKL_k\psi_k^2(x) }{\C(x,x)}.
\end{equation}
Figure~\ref{KarhunenLoeve_Exponential} illustrates these error estimates for the exponential (top) and Gaussian (bottom) covariance kernel. In each case the eigenfunctions~$\psi_1,\ldots,\psi_4$ are  smooth and approximate the stochastic process as superpositions in the series expansion~\eqref{KL}. The eigenvalues of the Gaussian kernel give more weight on the first eigenfunctions and hence, less  eigenfunctions are necessary to approximate smooth sample paths. The explained total  variance~\eqref{explainedVariance} is shown in the last panels with respect to the left vertical axis and the pointwise variance is in the scale shown at the right vertical axis.

\begin{figure}[htp]
	\scalebox{1}{\includegraphics[width=\linewidth]{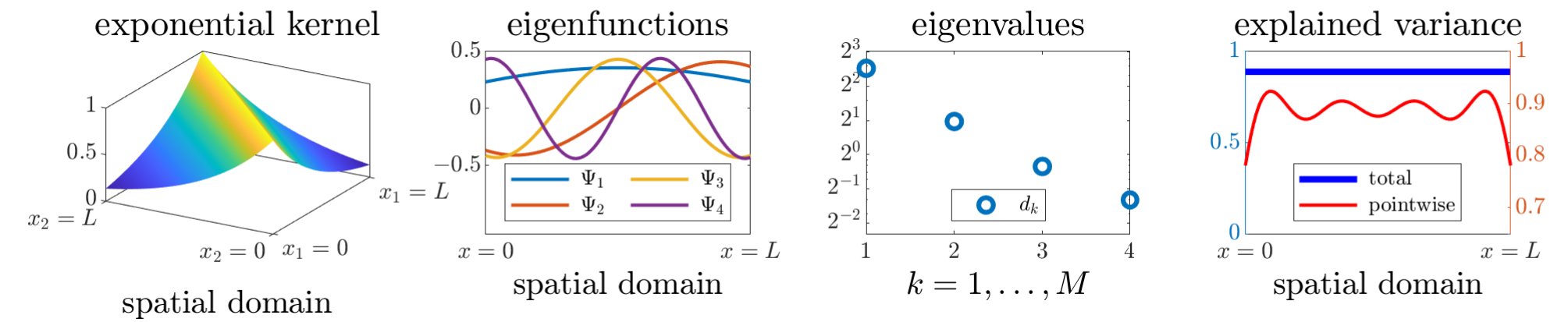}}

\vspace*{4mm}
	\scalebox{1}{\includegraphics[width=\linewidth]{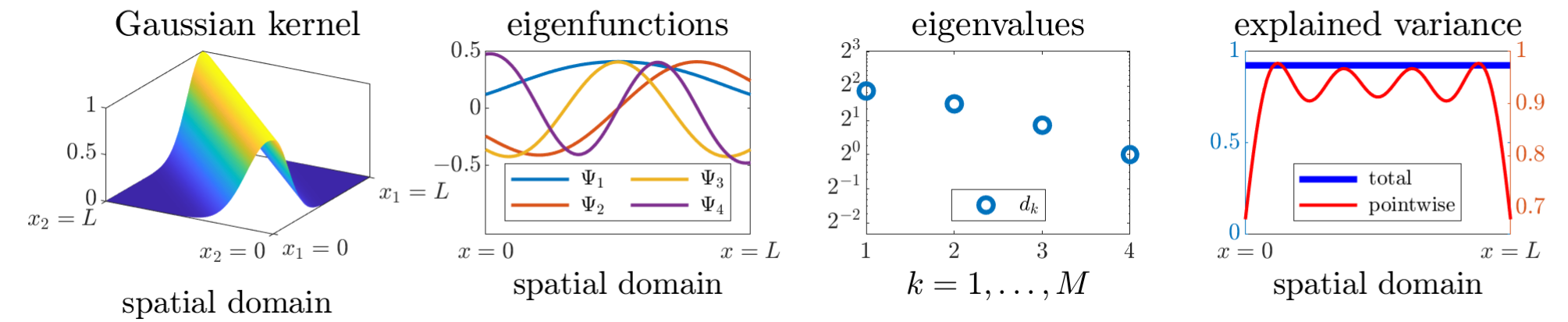}}
	\caption{Karhunen-Lo\`{e}ve decomposition for the exponential kernel~(top) and Gaussian kernel~(bottom).}
	\label{KarhunenLoeve_Exponential}
\end{figure}


\noindent
\begin{remark}
	
	The $L^2$-solution concept~\cite[Th.~A.4]{O1} requires that the characteristic speeds are Lipschitz continuous and the source term must be continuous in~$x\in[0,L]$. 
	Realizations corresponding to the exponential kernel are \emph{not} Lipschitz-continuous. Hence, the  deterministic stability concept cannot be applied if realizations of the characteristic speeds~\eqref{RandomDistinctEW} arise from a random field with exponential covariance structure. 
	In contrast, the stability in mean squared sense may still be considered, since smooth eigenfunctions result in the truncated Karhunen-Lo\`{e}ve expansion in smooth characteristic speeds~$\eqref{DistinctDeterministicEW}$. 
In the following we consider uncertainties in the source term, where this ambiguity does not arise.

Furthermore, the indicators~\eqref{explainedVariance} only state a reasonable truncation~$M\in\mathbb{N}$ for the Karhunen-Lo\`{e}ve decomposition~\eqref{KL}. 
The polynomial chaos expansion~\eqref{NgPC} requires additionally a truncation~$K\in\mathbb{N}$. Finding appropriate choices  is subject of current research. 
For instance, convergence rates have been established for the linear advection equation~\cite{H3}, for a linear BGK~model~\cite{Hui2020} and for scalar nonlinear equations~\cite{H1} provided that the dependency of the solution on the stochastic input is sufficiently smooth.  
However, results are so far only partial and problem dependent. 
In the following, the choice of gPC truncation~$K\in\mathbb{N}$ is not exactly justified.

\end{remark}

\subsection{Computational results}\label{SectionNumerics}

Using an equidistant space discretization~$\Delta x>0$, the space interval~$[0,L]$ is divided into $N$ cells such that ${\Delta x N = L}$ with cell centers ${x_i \coloneqq \big(i-\frac{1}{2}\big) \Delta x}$  for ${i=1,\ldots,N}$ and cell edges ${x_{i+{\nicefrac{1}{2}}} \coloneqq i \Delta x}$ for ${i=0,\ldots,N}$. Additional cells with centers~$x_0$ and~$x_{N+1}$ are added outside the domain. The discrete time steps are denoted by ${t_k\coloneqq k \Delta t}$ for ${k \in \mathbb{N}_0}$ and ${\Delta t>0}$ such that the CFL-condition
\begin{equation*}
\textup{CFL} \coloneqq  
\max\limits_{\stackrel{x \in [0,L],}{k=1,\ldots,|\K|}}
\Big\{ \DhatPlus_{k}(x) ,\big|\DhatMinus_{k}(x)\big| \Big\}  \
\frac{\Delta t}{\Delta x} \leq 1
\end{equation*}
holds. Cell averages at time steps $t_k$ are approximated by
\begin{equation*}
\Rd_i^k\coloneqq
\begin{pmatrix} \Rd_i^{k,+} \\ \Rd_i^{k,-} \end{pmatrix}
\approx \frac{1}{\Delta x}
\int\limits_{x_{i-{\nicefrac{1}{2}}}}^{x_{i+{\nicefrac{1}{2}}}}
\begin{pmatrix} \Rd^+(t_k,x) \\ \Rd^-(t_k,x) \end{pmatrix}
\d x \in \mathbb{R}^{2|\K | }.
\end{equation*}
The advection part can be approximated by a left and right sided upwind-scheme and the reaction part by the explicit Euler method, i.e.
\begin{equation*}
\Rd_i^{k+1} = \Rd_i^{k} - \frac{\Delta t}{\Delta x}
\begin{pmatrix}
\DhatPlus(x_i) \\
& \DhatMinus(x_i)
\end{pmatrix}
\begin{pmatrix}
\Rd_i^{k,+} - \Rd_{i-1}^{k,+}\\
\Rd_{i+1}^{k,-}  -\Rd_{i}^{k,-}
\end{pmatrix}
- \Delta t \Bhat(x_i) \Rd_i^{k}
\quad\text{ with}\quad
\begin{pmatrix}
\Rd^{k,+}_0 \\ \Rd^{k,-}_{N+1}
\end{pmatrix}
=\Ghat
\begin{pmatrix}
\Rd^{k,+}_{N} \\ \Rd^{k,-}_1
\end{pmatrix}.
\end{equation*}
Since the eigenvalues~$\boldsymbol{\widehat{\mathcal{D}}^\pm}$ and the source term $\Bhat$ are precomputed, the computational complexity of the advection part is linear and the complexity of the source term is quadratic, since it involves a matrix-vector multiplication. Hence,  computational cost is relatively low. The drawback is a restriction to a first-order method. 
To illustrate Theorem~\ref{TheoremStability}  we follow the approach in~\cite[Sec.~4]{Gerster2019} and approximate the continuous Lyapunov function~\eqref{Lyapunov} specified by the matrix~\eqref{Weights} as 
\begin{align}
&\L^k \coloneqq \Delta x \sum_{i=1}^{N}  \
\big(\Rd_i^{k}\big)^\T
W_i \Rd_i^{k} 
\quad\text{with  weights}\quad
W_i\coloneqq \diag\Big\{
W_i^+,W_i^-
\Big\}, 
\quad
W_i^\pm\coloneqq \diag\Big\{
w_i^{1,\pm},\ldots,w_i^{|\K|,\pm}
\Big\} \nonumber \\
&\text{for}\quad
w_i^{k,+}
\coloneqq
\frac{h_k^{+}}{\DhatPlus_k(x_i)}
\prod_{\ell = 1}^{i-1}
 \Bigg( 1-\Delta x \frac{\mub}{ \DhatPlus_k(x_\ell) } \Bigg)
\quad\text{and}\quad
w_i^{k,-}
\coloneqq
\frac{h_{k}^-}{\big|\DhatMinus_k(x_i)\big|}
\prod_{\ell = N}^{i+1}
\Bigg( 1+\Delta x \frac{\mub}{ \DhatMinus_k(x_\ell) } \Bigg).
\label{weightsDiscrete}
\end{align}


\begin{remark}
	
The continuous Lyapunov function for a single Riemann invariant with positive constant characteristic speed~$\lambda>0$ is of the form
\begin{equation*}
\L(t) = \int_0^L \Rd(t,x)^2 w(x) \d x
\quad
\text{for}
\quad
w(x)\coloneqq
e^{-\frac{\mub}{\lambda}x }
\end{equation*}
and a straightforward discretized analogue  would be
\begin{equation*}
\L^{k} = 
 \Delta x \sum\limits_{i=1}^N \big(\Rd_i^{k}\big)^2 e^{-\frac{\mub}{\lambda}x_i } . 
\end{equation*}
However,  a blow up in the discretized derivatives occurs in contrast to the continuous case~\cite{D1}.  
The choices~\eqref{weightsDiscrete} circumvent this problem by approximating the continuous derivative
\begin{equation*}
 w'(x)
= -\mub \frac{w(x)}{\lambda}
 \quad \text{as} \quad
\frac{ w_i - w_{i-1} }{\Delta x}
= -\mub
\frac{w_{i-1}}{\lambda},
\end{equation*}
where the one-sided difference quotient takes the direction of characteristic speeds into account. 
Then, the numerical method transfers continuous stability results to the discretized case and guarantees that also discretized Lyapunov functions  decay exponentially fast over time~\cite[Sec.~4]{Gerster2019}.

\end{remark}


\noindent 
Theoretical results are illustrated for a linearized plastic part
\begin{equation}\label{linearizedplasticpart}
 \bar{\epsilon}^p_{\sigma}\Big(\sigma^*\big(x;\xi(\omega)\big)\Big)
\approx
	\mathcal{K}\Big[ \bar{\epsilon}^p_{\sigma}(\sigma^*) \Big](x;\omega)
\end{equation}
that is modelled by a random field with Gaussian covariance structure. 
Since uncertainties arise only from the source term, the transform~\eqref{TrafoCeta} simplifies to 
$
\Rd =
\Rhat
$ and physical quantities are obtained by 
$$
\GK \big[y\big](t,x;\xi)
=
\GK \big[y^*\big](x;\xi)
+\GK \big[ \Delta y \big](t,x;\xi) 
\quad\text{with gPC modes}\quad
\widehat{\Delta y}(t,x)
=
\begin{pmatrix}
T_{1,1}(x) \indikator & T_{1,2}(x) \indikator \\
T_{2,1}(x) \indikator & T_{2,2}(x) \indikator
\end{pmatrix}
\Rd(t,x),
$$
where~$\indikator\in\mathbb{R}^{|\K|\times |\K| }$ denotes the identity matrix. 
Initial values are deterministic and stated directly in Riemann coordinates as 
$\R^\pm_0(x)= \cos(2\pi x)
$. We choose the parameters~$\kappa_0=\kappa_1=0.9$ in the feedback law~\eqref{feedbacklaw}. The parameters in the weights~\eqref{Weights} 
are~$\mub = 0.25$ and $h_k^\pm=1$. 
We assume  the relation~$
\DerivStress
= 0.02 \sigma^*
$, 
$E = 100$ 
and as discretization we use~$\Delta x = 2^{-8}$, $L=1$, $\textup{CFL} = 0.99$. The gPC  basis is~\eqref{BasisSparse} with Hermite polynomials and truncation $\NKL=K=4$.

\subsubsection{Stabilizing and destabilizing effects of the control and the source term}

The upper panels of Figure~\ref{Instable} are devoted to an unstable test problem. A simulation is shown, where  
the destabilizing effect of the source term is so strong that the applied feedback control is not sufficient to damp the introduced uncertainties. 
The left panel shows the mean of deviations~$\mathbb{E}\big[\sigma(t,x;\xi)-\sigma^*(x)\big]$  
from the desired stress~$\sigma^*=110$ (MPa), which increases in time. 
Likewise, the right panel states that the variance $\mathbb{V}\big[\sigma(t,x;\xi)\big]$ increases exponentially fast. 
At the beginning there is no variance, as initial values are deterministic. 
But the source term introduces uncertainties over time, which are not sufficiently reduced by the boundary control. 
Still, the statement of Theorem~\ref{TheoremStability}  holds. However, the guaranteed rate~$\mu \in \mathbb{R}$ is~\emph{negative} and hence, ensures no decay. 
Furthermore, we note that there are only in this test problem no deviations in the middle of the spatial domain due to the symmetric perturbation.

\begin{figure}[htp]
	\begin{center}
\textbf{\large unstable test problem}
		\scalebox{1}{\includegraphics[width=\linewidth]{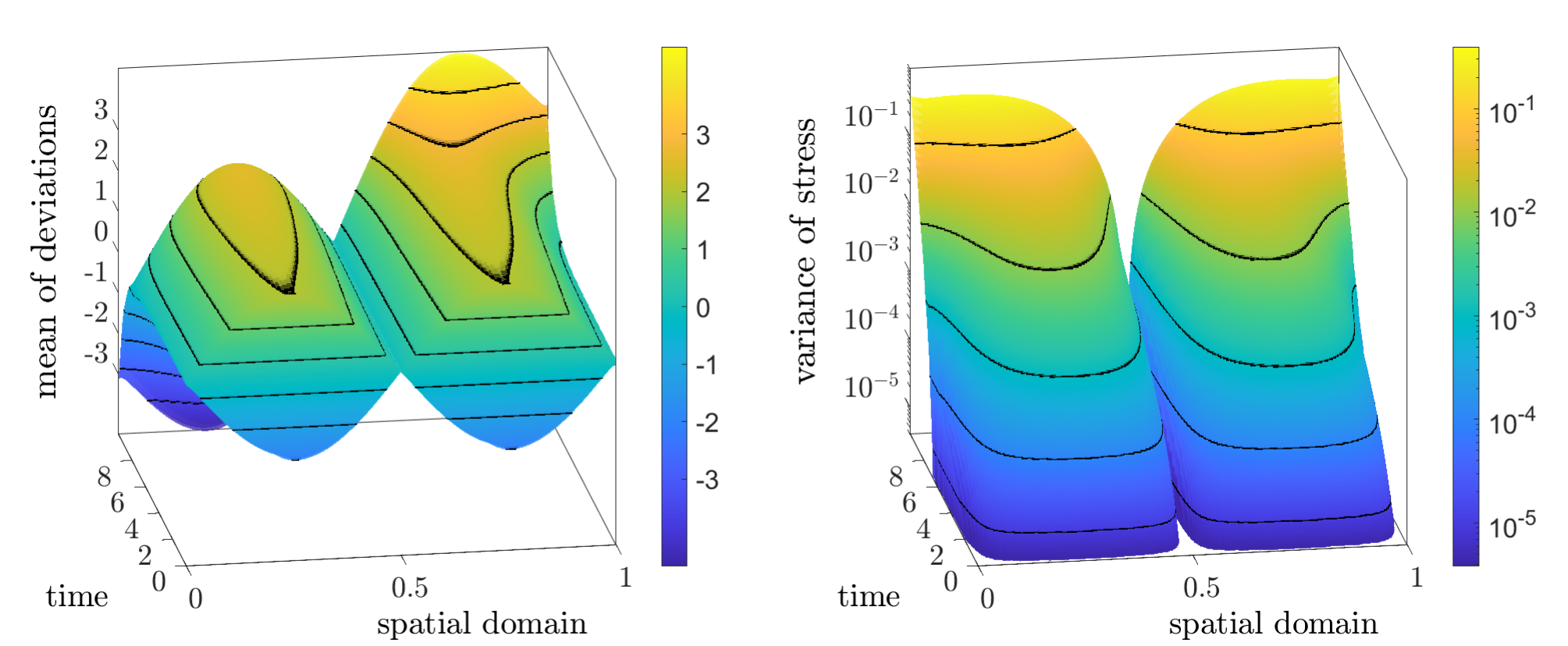}}
\textbf{\large stable test problem}
		\scalebox{1}{\includegraphics[width=\linewidth]{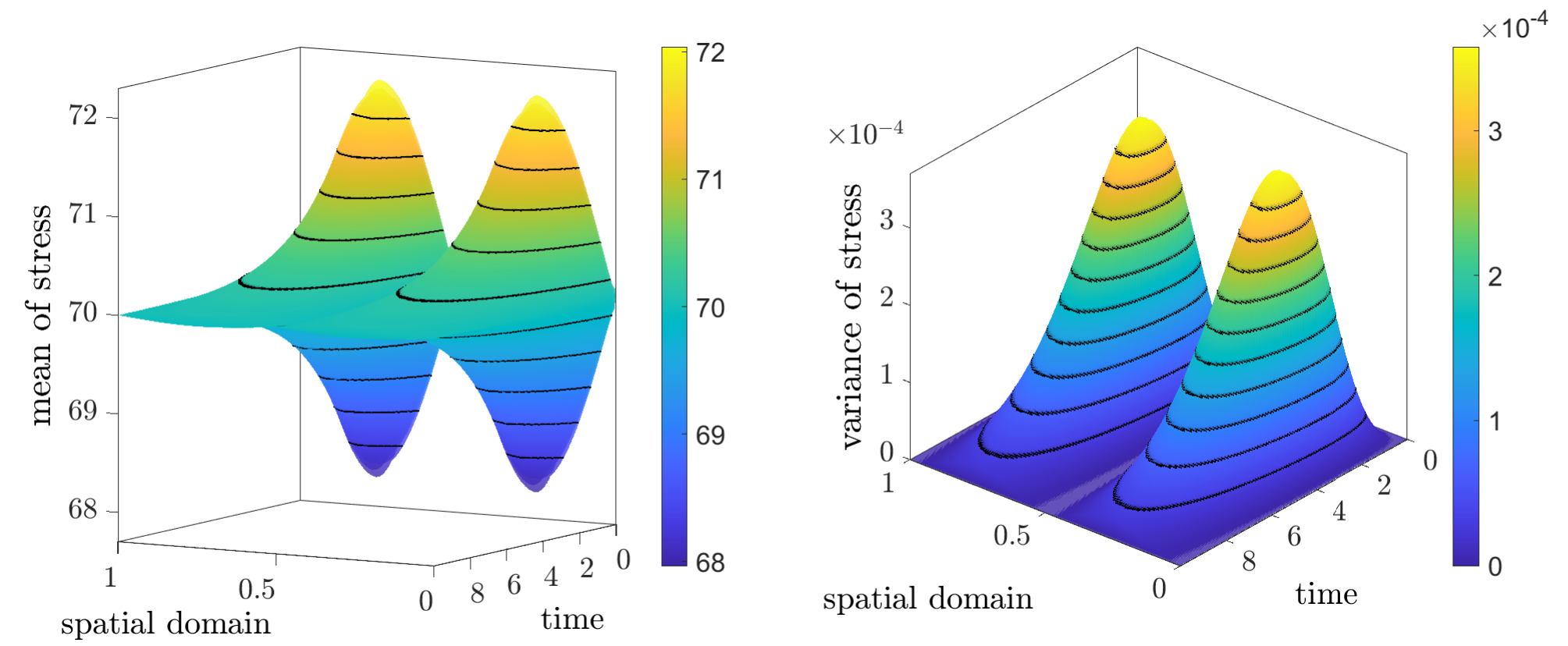}}
	\end{center}
	\caption{Unstable test problem (top): Mean of deviations~$\mathbb{E}\big[\sigma(t,x;\xi)-\sigma^*(x)\big]$  (left) and variance~$\mathbb{V}\big[\sigma(t,x;\xi)\big]$ (right) of uncertain stress~$\sigma$ with desired state~$\sigma^*=110$ (MPa). 	
	Stable test problem (bottom): Mean $\mathbb{E}\big[\sigma(t,x;\xi)\big]$ (left)  and variance~$\mathbb{V}\big[\sigma(t,x;\xi)\big]$ (right) of  stress~$\sigma$ with desired state~$\sigma^*=70$ (MPa).}	
	\label{Instable}
\end{figure}

The lower panels of Figure~\ref{Instable} show a simulation with desired stress~$\sigma^*=70$  (MPa), where destabilizing effects of the source term are smaller. 
Then, the applied feedback control is sufficient to make deviations decay over time and Theorem~\ref{TheoremStability}  ensures a~\emph{positive} decay rate. 
 The left panel shows a convergence of the mean to the desired stress~$\sigma^*=70$  (MPa). At the beginning there is again no variance.  Then,  the variance increases, when the source term introduces uncertainties. However, the feedback law makes also the variance vanish over time. Likewise, there are no deviations in the middle of the spatial domain due to symmetric perturbations.

Figure~\ref{FigLyapunov} illustrates that the derived boundary control~\eqref{feedbacklaw} allows to make the mean squared error~\eqref{MSE} decays exponentially fast. 
A normalized Lyapunov function~$\bar{\L}(t)$, 
which is used in the proof of Theorem~\ref{TheoremStability} 
to make the $L^2$-norm~$\big\lVert \Rd(t,\cdot)\big\rVert_{L^2}$ and hence the mean squared error~\eqref{MSE} decay exponentially fast over time,  
 is plotted in blue. 
The left panel shows a simulation for a stress-strain relationship described by a Bergstr\"om-model with state~$\sigma^*=70$ (MPa)  and the right panel for a DRX-model with state~$\sigma^*=50$, respectively. As illustrated in Figure~\ref{measurements}, the considered stress~$\sigma^*$ is larger in the first case, which leads to a larger destabilizing effect of the source term. Hence, the feedback control steers the system faster to the desired state, when the DRX-model is used.

\begin{figure}[h]
	\begin{center}
		\scalebox{0.8}{\includegraphics[width=\linewidth]{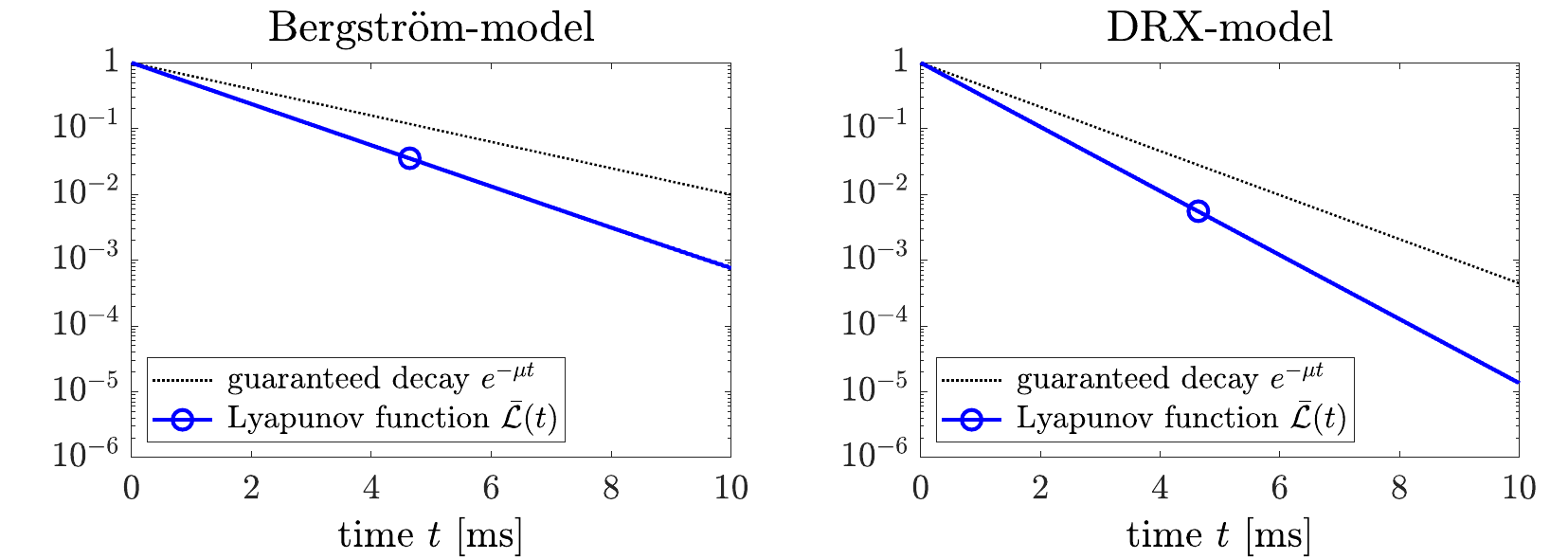}}
	\end{center}
	\caption{Normalized Lyapunov functions are plotted in blue for the feedback control~\eqref{feedbacklaw} with parameters~${\kappa_0=\kappa_1=0.9}$. 
		Theorem~\ref{TheoremStability} guarantees the bound~$\bar{\L}(t)\leq e^{-\mu t}$ (black, dotted). }
	\label{FigLyapunov}
\end{figure}

\subsubsection{Numerical investigation of truncations in the polynomial chaos and Karhunen-Lo\`{e}ve expansion}
Finally, we address the question of optimal truncations in the polynomial chaos and Karhunen-Lo\`{e}ve expansion.  Figure~\ref{FigureTRUNCATION} shows 
for the choices~$K\in\big\{1,\ldots,5\big\}$ and~$M\in\big\{1,\ldots,6\big\}$ the observed decay rate
$$
\mu_{\textup{obs}}
\coloneqq 
-
\frac{1}{t_{\textup{end}}}
\ln\bigg( 
\frac{
\L(t_{\textup{end}})
}{\L(0)} \bigg)
$$
at time~$t_{\textup{end}}=10$. 
The left panel considers the stable case in Figure~\ref{Instable} and the right panel the unstable case. 
Hence, the decay rates are positive in the stable case and negative for the unstable test problem. 
		The dashed, horizontal lines highlight the decay rates for the highest trunction~$K=5$. We observe that the decay rates depend more on the number of random variables than on the degree of ansatz polynomials. Indeed, there are no changes in the rates observable for truncations~$K\geq 3$. An intuitive explanation is given by the relation~\eqref{KLvsPC} in Remark~1, which states that the Karhunen-Lo\`{e}ve expansion is a special case of a gPC expansion with normally distributed random variables and truncation~$K=1$. 
		Although the input uncertainty, given by the linearized plastic part~\eqref{linearizedplasticpart}, requires the gPC truncation~$K=1$, 
		higher truncations are needed to expand the solution to the boundary value problem itself, since it depends on the stochastic input in a nonlinear way. Here,  small truncations with $K=2$ lead to accurate results. 
		However, larger degrees in the ansatz functions might be necessary, when the distribution is far away from being Gaussian.

The choices~$K=5$ and $M=6$ lead to a  set~$\eqref{BasisSparse}$ with 
 462 
elements. 
Hence, the computational results in Figure~\ref{FigureTRUNCATION} also show that the discretization in Riemann coordinates, described in Section~\ref{SectionNumerics}, is able to handle large systems with multiple sources of uncertainties. 
However, computational cost heavily depend on the truncations $(K,M)$. For the considered test problems the truncation~$K=2$ seems optimal. 
For instance,  the choice~$(K,M)=(2,29)$,  when the set~$\eqref{BasisSparse}$ has
465 
elements. 
 leads to a similar system size and hence to a similar computational complexity. 
Therefore, the investigation of an optimal truncation has here the benefit of considering 23 random variables more with the same complexity. 
This motivates trustful indicators that are, however,  subject of further research. 


\begin{figure}[h]
	\begin{center}
		\scalebox{1}{\includegraphics[width=\linewidth]{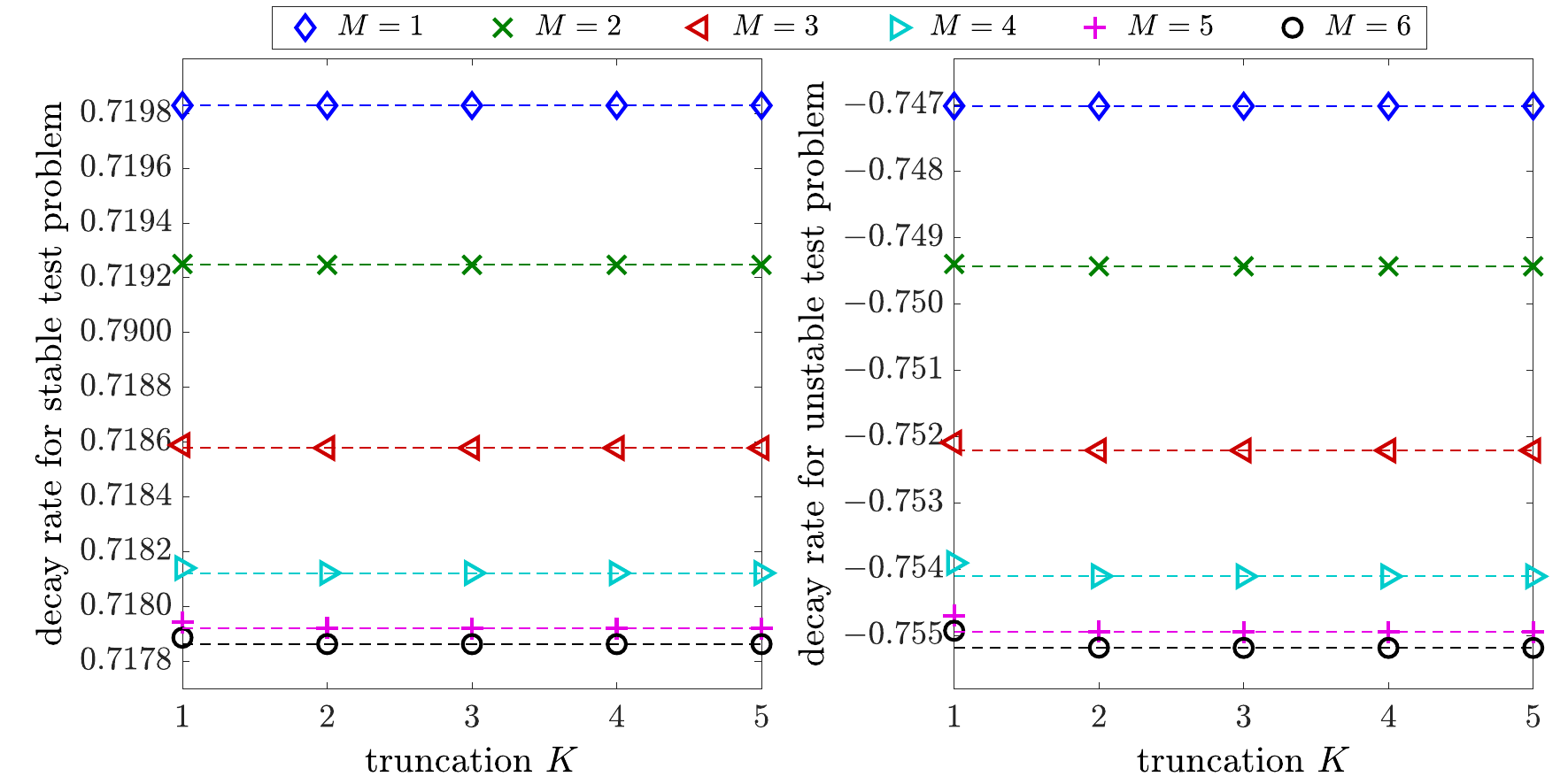}}
	\end{center}
	\caption{Comparison of truncations~$K$ in the polynomial chaos expansion and number~$M$ of random variables in the Karhunen-Lo\`{e}ve expansion for the stable test problem (left) with desired stress~$\sigma^*=70$  (MPa) and the unstable case with stress~$\sigma^*=110$.}	
	\label{FigureTRUNCATION}
\end{figure}

\section{Summary}
We have dealt with the boundary control of random  hyperbolic balance laws to damp deviations of desired states exponentially fast over time. 
Stochastic influences have been introduced as a series of orthogonal functions, called polynomial chaos expansions. Then, a stochastic Galerkin formulation has been derived that  reformulates the underlying random system as a sequence of deterministic problems. 
We have established for linearized  balance laws that  this series comes along with countably  many characteristic curves. 
This allows to define a Lyapunov function that yields an upper bound on mean squared deviations of a desired state. 
A modification of the Lyapunov stability analysis~\cite[Sec.~5]{O1} based on dissipative boundary conditions~\cite[Sec.~2]{L7} makes the Lyapunov function decay  exponentially fast, which in turn results in a diminishing mean squared error.   
Furthermore,  extensions and limitations to nonlinear conservation laws have been discussed. 

Theoretical results have been illustrated by means of a viscoplastic material. 
The deforming process is partially described by a stress-strain relationship under  uncertainties that are modelled by Gaussian random fields. To account for these random fields, we have described in detail the relationship between Karhunen-Lo\`{e}ve and polynomial chaos expansions.

\bigskip
\acknowledgements
The authors thank the Deutsche Forschungsgemeinschaft (DFG,
German Research Foundation) for the financial support through
projects BA4253/11-1 and HE5386/19-1 of the priority program
2183 ``Property-Controlled Forming Processes''. 
Furthermore, this work is supported by the PRIME programme of the German Academic Exchange Service (DAAD) and we would like to offer special thanks to Simone G\"ottlich.

\bibliographystyle{unsrt}

\bibliography{References}
\end{document}